\def\dd{\mathrm{\,d}} % regular derivative
\newcommand\abs[1]{\left|#1\right|} % absolute value
\newcommand\norm[1]{\left\lVert#1\right\rVert} % norm
\newcommand\normp[2]{\left[#1\right]_{#2}} % A_p characteristic
\DeclareMathOperator{\diam}{diam} % diameter of a set
\DeclareMathOperator{\dist}{\mathit{d}} % distance
\newcommand{\Chi}{\mathcal{X}} % characteristic function
\let\emptyset\varnothing % nicer empty set
\DeclareMathOperator*{\essinf}{ess\,inf}
\DeclareMathOperator*{\esssup}{ess\,sup}
\DeclareMathOperator*{\loc}{loc} % local
\DeclareMathOperator{\rad}{r} % radius
\newcommand{\R}{\mathbb{R}}
\newcommand{\BMO}{\mathrm{BMO}}
\def\Xint#1{\mathchoice
{\XXint\displaystyle\textstyle{#1}}%
{\XXint\textstyle\scriptstyle{#1}}%
{\XXint\scriptstyle\scriptscriptstyle{#1}}%
{\XXint\scriptscriptstyle\scriptscriptstyle{#1}}%
\!\int}
\def\XXint#1#2#3{\mkern3mu{\setbox0=\hbox{$#1{#2#3}{\int}$ }
\vcenter{\hbox{$#2#3$ }}\kern-.6\wd0}}
\def\dashint{\Xint-}
\theoremstyle{plain}
\newtheorem{theorem}{Theorem}
\newtheorem{propo}[theorem]{Proposition}
\newtheorem{lemma}[theorem]{Lemma}
\newtheorem{claim}[theorem]{Claim}
\numberwithin{theorem}{section}
\theoremstyle{remark}
\newtheorem{example}[theorem]{Example}
\theoremstyle{definition}
\newtheorem{dfn}[theorem]{Definition}
\begin{document}

\title{Characterizations of weak reverse H\"older inequalities on metric measure spaces}
\date{\today}

\author{Juha Kinnunen}
\address[J..K.]{Aalto University, Department of Mathematics and Systems Analysis, P.O. BOX 11100, FI-00076 Aalto, Finland}
\email{juha.k.kinnunen@aalto.fi}

\author{Emma-Karoliina Kurki}
\address[E-K.K.]{Aalto University, Department of Mathematics and Systems Analysis, P.O. BOX 11100, FI-00076 Aalto, Finland}
\email{emma-karoliina.kurki@aalto.fi}

\author{Carlos Mudarra}
\address[C.M.]{Aalto University, Department of Mathematics and Systems Analysis, P.O. BOX 11100, FI-00076 Aalto, Finland}
\email{carlos.c.mudarra@jyu.fi}

\keywords{reverse H\"older inequalities, weak Muckenhoupt weights, maximal functions}

\subjclass[2010]{42B35, 30L99}

\thanks{\emph{Acknowledgements.} E.-K. Kurki has been funded by a young researcher's grant from the Emil Aaltonen Foundation.
 C. Mudarra acknowledges financial support from the Academy of Finland.}
\begin{abstract}
We present ten different characterizations of functions satisfying a weak reverse H\"older inequality on an open subset of a metric measure space with a doubling measure. Among others, we describe these functions as a class of weak $A_\infty$ weights, which is a generalization of Muckenhoupt weights that allows for nondoubling weights. Although our main results are modeled after conditions that hold true for Muckenhoupt weights, 
we also discuss two conditions for Muckenhoupt $A_\infty$ weights that fail to hold for weak $A_\infty$ weights.
\end{abstract}

\maketitle

\section{Introduction}
Assume that $(X,d,\mu)$ is a metric measure space with a doubling measure $\mu$, and let $\Omega$ be an open subset of $X$. We discuss nonnegative, locally integrable weight functions $w$ on $\Omega$ that satisfy a weak reverse H\"older inequality, namely that there exist $p>1$ and a constant $C$ such that 
\begin{equation}\label{wrevhint}
\left(\dashint_B w^p\dd\mu\right)^\frac{1}{p} \leq C \dashint_{2B}w\dd\mu
\end{equation}
for every ball  $B$ with $2B\Subset \Omega$. 
This inequality is weaker than the corresponding reverse H\"older inequality with the same ball on both sides, that is,
\begin{equation}\label{revhint}
\left(\dashint_B w^p\dd\mu\right)^\frac{1}{p} \leq C \dashint_{B}w\dd\mu
\end{equation}
for every ball $B\Subset \Omega$.
The larger ball on the right-hand side of \eqref{wrevhint} is a relaxation of \eqref{revhint} that allows for nondoubling weights. 
In addition, this kind of weights may vanish on a set of positive measure. Weak reverse H\"older inequalities with applications in the calculus of variations and partial differential equations have been studied by Meyers and Elcrat \cite{MR417568} and Giaquinta and Modica \cite{MR549962}. Functions satisfying a weak reverse H\"older inequality have also been discussed in \cites{MR3606546,MR1308005,MR3310925,MR1274087,spadaro,MR570689,MR2173373}. 

The reverse H\"older inequality \eqref{revhint} plays a central role in the theory of Muckenhoupt weights. Comprehensive lists of characterizations of Muckenhoupt $A_\infty$ weights can be found in \cites{duo_paseky,MR3473651,MR2815740, shukla_thesis}, as well as in many classic reference works.
In particular, a weight belongs to a Muckenhoupt $A_p$ class with $1<p<\infty$ (and thus to the class $A_\infty$) if and only if it satisfies the reverse H\"older inequality \eqref{revhint}. The connection is very well understood in the Euclidean case; however, some unexpected phenomena occur in more general metric measure spaces.
Str\"{o}mberg and Torchinsky \cite{MR1011673} have discussed the equivalence between various conditions for the Muckenhoupt class $A_\infty$ on spaces of homogeneous type, under the relatively strong assumption that the measure of a ball depends continuously on its radius. Kinnunen and Shukla \cite{MR3265363} discuss this question under a weaker condition called the annular decay property. This covers a relatively large class of spaces including, for instance, all length spaces. On metric measure spaces, weak reverse H\"older inequalities seem to appear even if one begins with the reverse H\"older inequality \eqref{revhint}; see \cites{MR3606546,HytonenPerezRela2012}. 
Since the weight is not necessarily doubling, it is not clear how to return to the original class of functions. The principal references to the theory of Muckenhoupt weights on metric measure spaces are \cites{MR0499948,MR1791462,MR1011673}. 

In the present article, we reconsider functions satisfying the weak reverse H\"older inequality \eqref{wrevhint}. The main theorem (Theorem \ref{thm:char}) presents no less than ten alternative characterizations of functions satisfying this condition. The reader of Section \ref{sec:char} will find that many of our characterizations mirror those of Muckenhoupt $A_\infty$ weights with the necessary modifications. These conditions include a weak $A_\infty$ condition introduced by Spadaro \cite{spadaro}, who established the equivalence with the weak reverse H\"older inequality in the Euclidean case with the Lebesgue measure. The weak $A_\infty$ class consists of locally integrable, almost everywhere nonnegative functions that satisfy the following condition:  if for every $\eta>0$ there exists $\varepsilon >0$ such that if $B$ is a ball with $2B \Subset \Omega$ and $F \subset B$ is a measurable set, then
\begin{equation}\label{nondo}
\mu(F) \leq \varepsilon \mu(B) \quad \implies \quad \int_F w \dd \mu
\leq \eta \int_{2B} w \dd \mu.
\end{equation}

Several characterizations that we present are new, while others collect and generalize existing results. In particular, we extend the result of Spadaro \cite{spadaro} to metric measure spaces with a doubling measure. Compared to the Euclidean proof, references to the Besicovitch covering theorem and continuity of measure have been replaced with suitable arguments. A recent contribution on the topic of weak $A_\infty$ weights is Anderson, Hyt\"onen, and Tapiola \cite{MR3606546}, where they show that the so-called weak Fujii--Wilson condition (our Theorem \ref{thm:char}~\ref{char-fujii}) implies a weak reverse H\"older inequality by the means of Christ-type dyadic structures. While their choice of method is justified as a deliberate departure from \cite{HytonenPerezRela2012}, we give an elementary proof of the Fujii--Wilson characterization in Proposition \ref{charc} below. In contrast to \cites{MR3606546, HytonenPerezRela2012} and the habitual choice in harmonic analysis, we do not require the weight to be defined in the entire space, but an open subset is enough. Our choice is relevant in applications to partial differential equations.

Although the assertions in Theorem \ref{thm:char} are modeled after conditions that hold true for Muckenhoupt weights, 
we also discuss two conditions for Muckenhoupt $A_\infty$ weights that fail to hold for functions that merely satisfy a weak reverse H\"older inequality. Section \ref{weakRHI} establishes the equivalence of \eqref{nondo} with the other characterizing conditions, completing the proof of the main theorem. Finally, we remark that the factor $2$ in \eqref{nondo} might as well be replaced with any other $\sigma>1$, resulting in an identical class of functions. For this question, see also \cite{MR802488}*{Theorem 2}.

\section{Preliminaries}

Throughout, we consider a complete metric measure space $\left(X,\dist,\mu\right)$, where the Borel regular measure $\mu$ satisfies the doubling condition: there exists a constant $C_d=C_d(\mu) > 1$ such that 
$$
0<\mu\left(2 B\right) \leq C_{d} \mu\left( B \right) < \infty 
$$
for all balls $B\subset X$. The doubling constant $C_d$ is assumed to be strictly larger than $1$ to exclude trivial spaces. 
The doubling condition implies that $X$ is separable, and since $X$ is complete, it is proper \cite{MR2867756}*{Proposition 3.1}. An open ball with center $x\in X$ and radius $r>0$ is denoted $B = B(x, r)$, where the center and radius are left out when not relevant to the discussion. We denote $\rad(B) = r$ when $B=B(x,r)$, and $aB = B(x, ar)$ for the ball dilated by a constant $a>0$. Various constants are denoted by the letter $C$, whose dependence on parameters is indicated in parentheses. 
For an open set $\Omega$ and an arbitrary subset $E\subset\Omega$,  the relation $E \Subset\Omega$ indicates that the closure of $E$ is a compact subset of $\Omega$.

The following lemma is related to coverings with balls, and will be of use throughout the discussion. 
\begin{lemma}\label{claimsigmacoveringballs}
Let $(X,d,\mu)$ be a metric measure space with a doubling measure $\mu$, $B \subset X$ a ball, and $\sigma>0$. There exist $N=N(\sigma, C_d)$ balls $\lbrace B_i \rbrace_i$ centered at points $x_i\in B$ such that $B \subset \bigcup_{i=1}^N B_i$ and $\rad(B_i)= \sigma \rad(B)$. 
\end{lemma}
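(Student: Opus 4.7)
The plan is a standard maximal packing argument combined with the doubling condition to bound the cardinality. Let $B = B(x_0, r)$.

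First I would choose, by Zorn's lemma (or a simple greedy construction since $X$ is separable), a maximal subset $\{x_1, \dots, x_N\} \subset B$ with the property that $\dist(x_i, x_j) \geq \sigma r$ for all $i \neq j$. The maximality is the crucial feature: for any $y \in B$, there must exist some $i$ with $\dist(y, x_i) < \sigma r$, for otherwise we could adjoin $y$ to the family and violate maximality. Hence
$$B \subset \bigcup_{i=1}^N B(x_i, \sigma r),$$
which is the desired covering. So the only remaining task is to show that $N$ admits a bound depending only on $\sigma$ and $C_d$, and in particular that the maximal family is finite.

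To bound $N$, I would use the separation property: the balls $\bigl\{B(x_i, \sigma r/2)\bigr\}_{i=1}^N$ are pairwise disjoint, and all contained in $B(x_0, (1 + \sigma/2) r)$. Summing,
$$\sum_{i=1}^N \mu\bigl(B(x_i, \sigma r/2)\bigr) \leq \mu\bigl(B(x_0, (1 + \sigma/2) r)\bigr).$$
The right-hand side is bounded by $C_d^{k_1} \mu(B)$ where $k_1 = k_1(\sigma)$ is any integer with $2^{k_1} \geq 1 + \sigma/2$, by iterating the doubling condition. For the left-hand side, since $x_i \in B$ we have $B \subset B(x_i, 2r)$, and by iterating doubling again in the reverse direction we find a $k_2 = k_2(\sigma)$ with $2^{k_2} \sigma/2 \geq 2$ such that
$$\mu(B) \leq \mu\bigl(B(x_i, 2r)\bigr) \leq C_d^{k_2} \mu\bigl(B(x_i, \sigma r/2)\bigr).$$
Combining these two estimates gives $N \cdot C_d^{-k_2} \mu(B) \leq C_d^{k_1} \mu(B)$, whence $N \leq C_d^{k_1 + k_2}$, a quantity depending only on $\sigma$ and $C_d$. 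Since $\mu(B) > 0$ by the doubling assumption, this also guarantees that the maximal family is finite.

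The argument is essentially routine; the only point requiring care is the bookkeeping with the doubling constant, namely choosing the exponents $k_1, k_2$ that work uniformly in $\sigma$ (in particular both for very small and very large $\sigma$, though large $\sigma$ is really trivial since one may take $N = 1$). No deep obstacle is expected.
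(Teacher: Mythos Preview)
Your proof is correct and follows essentially the same approach as the paper: obtain a family of pairwise disjoint balls of radius comparable to $\sigma r$ centered in $B$, then bound the cardinality by comparing $\sum_i \mu(B(x_i,\cdot))$ against $\mu((1+c\sigma)B)$ via doubling, using $B \subset B(x_i,2r)$ for the lower bound. The only cosmetic difference is that the paper produces the disjoint family by invoking the Vitali covering lemma on the collection $\{B(x,\tfrac{\sigma}{5}r)\}_{x\in B}$, whereas you construct it directly as a maximal $\sigma r$-separated set; the cardinality count is then identical in structure.
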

\begin{proof} 
Let $r$ denote the radius of the ball $B$ from the assumption, and $\lbrace B \left( x, \frac{\sigma}{5} r \right) \rbrace_{x\in B}$ be an arbitrary collection of balls. By the Vitali covering lemma (\cite{MR2867756}*{Lemma 1.7}) we find a disjoint subcollection $\lbrace B \left( x_i, \frac{\sigma}{5} r \right) \rbrace_i$ such that $B \subset \bigcup_i B_i$, where $B_i = B\left( x_i, \sigma  r \right).$ Because each $B_i$ is centered at a point $x_i\in B$ it is clear that $B \subset B(x_i,  2 r)$, and the doubling condition implies
\begin{equation}\label{ztemp2}
\mu\left( B \left( x_i, \tfrac{\sigma}{5} r \right) \right) \geq C(\sigma, C_d) \mu \left( B(x_i,  2 r) \right) \geq C(\sigma, C_d) \mu (B). 
\end{equation}
Since every ball $B \left( x_i, \frac{\sigma}{5} r \right)$ is contained in $\left(1+ \frac{\sigma}{5} \right) B$, we further have 
\begin{equation}\label{ztemp3}
\sum_i \mu \left( B \left( x_i, \tfrac{\sigma}{5} r \right) \right) 
= \mu \Bigl( \bigcup_i B \left( x_i, \tfrac{\sigma}{5} r \right) \Bigr) \leq  \mu \left( \left(1+ \tfrac{\sigma}{5} \right) B \right) \leq \widetilde{C}(\sigma, C_d) \mu(B).
\end{equation}
Combining inequalities \eqref{ztemp2} and \eqref{ztemp3} implies that the cardinal of $\lbrace B_i \rbrace_i$ is finite and bounded by a number depending only on $\sigma$ and $C_d$. 
\end{proof}

Whenever $E\subset X$ is a measurable subset and the function $f$ is integrable on every compact subset of $E$ we say that $f$ is locally integrable on $E$, denoted $f\in L^1_{\loc}(E)$. If the measure $\nu$ is absolutely continuous with respect to $\mu$ and if there exists a nonnegative locally integrable function $w$ such that $d\nu=w\dd\mu$, we call $\nu$ a weighted measure with respect to $\mu$, and $w$ a weight \cite{MR1011673}. For any measurable subset $F\subset E$ and weight $w$ on $E$, we write $w(F)=\int_F w \dd \mu$. 
A weight is said to be doubling, if the measure induced by the weight is doubling.
The integral average of a function $f\in L^1(E)$ over a measurable set $F\subset E$, with $0<\mu(F)<\infty$, is abbreviated 
$$
f_F = \dashint_F f\dd\mu = \frac{1}{\mu(F)}\int_F f\dd\mu.
$$

The Hardy-Littlewood maximal function of $f\in L^1_{\loc}(X)$ is defined by
\begin{equation*}
Mf(x)=\sup_{B\ni x}\frac{1}{\mu(B)}\int_B\abs{f}\dd\mu,
\end{equation*}
where the supremum is taken over all balls $B\subset X$ containing the point $x$. We will be invoking the following weak type $(1,1)$ inequality for the maximal function, which we state without proof; see \cite{MR1011673}*{Theorem I.3}.
\begin{propo}\label{thm:weaktype}Let $(X,d,\mu)$ be a metric measure space with a doubling measure $\mu$. Then for every $f\in L^1(X)$ and $t > 0$ it holds that
$$
\mu\left(\left\{Mf>t\right\}\right) \leq \frac{C}{t}\int_X \abs{f} \dd \mu
$$
with a constant $C = C(C_d)$.
\end{propo}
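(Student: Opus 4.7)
The plan is to carry out the classical Vitali covering argument adapted to the doubling metric setting. Let $f \in L^1(X)$, $t > 0$, and write $E_t = \{Mf > t\}$. For each $x \in E_t$, the definition of $M$ produces a ball $B_x$ with $x \in B_x$ and
$$\mu(B_x) < \frac{1}{t} \int_{B_x} \abs{f} \dd \mu.$$

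First I would reduce to a compact subset of $E_t$. As noted in the preliminaries, doubling and completeness force $X$ to be separable and proper, hence $\sigma$-compact. The Borel regular measure $\mu$ is then Radon, so $\mu(E_t) = \sup\{\mu(K) : K \subset E_t \text{ compact}\}$, and it suffices to bound $\mu(K)$ uniformly. Fix such a $K$. The collection $\{B_x\}_{x \in K}$ covers $K$ and, by compactness, admits a finite subcover $B_1, \dots, B_N$. From this finite family I would extract pairwise disjoint balls $B_{i_1}, \dots, B_{i_m}$ satisfying $\bigcup_{k=1}^N B_k \subset \bigcup_{j=1}^m 5 B_{i_j}$ via the standard greedy $5B$-argument (reorder by decreasing radius and select each ball disjoint from those already chosen); boundedness of the radii needed to run this selection is automatic for a finite family.

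The doubling condition then closes the estimate. Since $5B_{i_j} \subset 8B_{i_j}$ and $8B_{i_j}$ is obtained from $B_{i_j}$ by three successive dilations by $2$, we have $\mu(5 B_{i_j}) \leq C_d^3 \mu(B_{i_j})$. Combining this with the disjointness of the $B_{i_j}$ and the defining inequality of each one,
$$\mu(K) \leq \sum_{j=1}^m \mu(5 B_{i_j}) \leq C_d^3 \sum_{j=1}^m \mu(B_{i_j}) \leq \frac{C_d^3}{t} \sum_{j=1}^m \int_{B_{i_j}} \abs{f} \dd \mu \leq \frac{C_d^3}{t} \int_X \abs{f} \dd \mu.$$
Taking the supremum over compact $K \subset E_t$ gives the proposition with $C = C_d^3$.

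The main obstacle I anticipate is a technical one: the balls $B_x$ can a priori have unbounded radii, which would obstruct a direct application of the Vitali lemma to the full family $\{B_x\}_{x \in E_t}$. The compact reduction sidesteps this cleanly, since any finite subcover is trivially uniformly bounded in radius. An alternative would be to observe that $\mu(B_x) \leq t^{-1}\norm{f}_{L^1}$, and then invoke the polynomial reverse-doubling lower bound on the measure of balls implied by $C_d$ to obtain a uniform radius bound; however the compact-exhaustion route is shorter and more transparent.
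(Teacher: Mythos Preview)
The paper does not prove this proposition; it is stated explicitly ``without proof'' with a reference to \cite{MR1011673}*{Theorem I.3}. Your argument is the standard Vitali covering proof and is correct: the reduction to compact subsets via inner regularity (justified since $X$ is proper, hence $\sigma$-compact, and $\mu$ is locally finite and Borel regular) neatly avoids the issue of unbounded radii, the finite greedy $5B$-selection is valid, and the doubling estimate $\mu(5B)\le C_d^3\mu(B)$ closes the bound.
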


Let $\Omega\subset X$ be a nonempty open subset and $1<p<\infty$. A nonnegative function $w\in L^1_{\loc}(\Omega)$ belongs to 
the Muckenhoupt class $A_p(\Omega)$ if
\begin{equation}\label{apcondition}
\normp{w}{p} = 
\sup_{\substack{B\subset \Omega}}\left(\dashint_{B}w\dd\mu\right)\left(\dashint_{B}w^{-\frac{1}{p-1}}\dd \mu\right)^{p-1}<\infty,
\end{equation}
where the supremum is taken over all balls $B\subset \Omega$. For $p=1$, we require that there exists a constant $\normp{w}{1}<\infty$ such that
\begin{equation*}
\dashint_{B}w\dd\mu \leq \normp{w}{1}\essinf_{x\in B} w(x)
\end{equation*}
for all balls $B\subset \Omega$.
In both cases, the constant denoted $\normp{w}{p}$ is called the $A_p$ constant of $w$. As for the class $A_\infty(\Omega)$, we choose the following definition (see, for instance, \cite{MR1011673}): there exist constants $0< \varepsilon, \eta<1$ such that for any ball $B\subset \Omega$ and measurable subset $A\subset B$, 
$$
\mu(A)\leq \varepsilon \mu(B) \quad\implies\quad w(A) \leq\eta w(B). 
$$

Muckenhoupt $A_p$ weights with $1<p<\infty$ are those weights $w$ for which the maximal operator is bounded on the weighted space $L^p(X; w)$.
Moreover, Muckenhoupt classes are closely connected to the functions of bounded mean oscillation ($\BMO$). Let $\Omega\subset X$ be a nonempty open set.
We say that a function $f\in L^1_{\loc}(\Omega)$ belongs to $\BMO(\Omega)$ if
$$
\norm{f}_{\BMO(\Omega)} = \sup_{B \subset \Omega} \dashint_B \abs{f-f_B}\dd\mu <\infty. 
$$
The constant $\norm{f}_{\BMO(\Omega)}$, strictly speaking a seminorm, is spoken of as the $\BMO$ norm of $f$. Whenever $p>1$, we have $\BMO(\Omega) = \left\{\alpha \log w\mathbin{:} \alpha\geq 0 \text{ and }w\in A_p(\Omega)\right\}$; for details see \cite{shukla_thesis}*{Theorem 3.11} and the remark thereafter. 

We say that a nonnegative $w\in L^1_{\loc}(\Omega)$ belongs to the reverse H\"older class $RH_p(\Omega)$, with $1<p<\infty$, if there exists a constant $C$ such that 
\begin{equation}\label{strongrevp}
\left(\dashint_B w^p\dd\mu\right)^\frac{1}{p} \leq C \dashint_{B}w\dd\mu
\end{equation}
for every ball $B\Subset \Omega$.
Muckenhoupt $A_\infty$ weights are precisely the weights with this property in $\mathbb{R}^n$ and, more generally, in spaces with the annular decay property; see, for instance, \cite{MR807149} and \cite{shukla_thesis}.
In particular, a weight in $RH_p(\Omega)$ is doubling over balls $B$ with $2B\Subset\Omega$.
The weak reverse H\"older class $WRH_p(\Omega)$ is a nondoubling analogy of $RH_p(\Omega)$. We say that an almost everywhere nonnegative function $w\in L^1_{\loc}(\Omega)$ belongs to $WRH_p(\Omega)$ if there exist $1<p<\infty$ and a constant $C$ such that
\begin{equation}\label{revp}
\left(\dashint_B w^p\dd\mu\right)^\frac{1}{p} \leq C \dashint_{2B}w\dd\mu
\end{equation}
for every ball $B$ with $2B\Subset \Omega$.  Note that for such balls $B,$ we have $w\in L^1(2B)$.

We also consider the limiting case $p=\infty$ of the weak reverse H\"older condition.
\begin{dfn} For an open set $\Omega\subset X$ we say that the weight $w$ belongs to the weak $RH_\infty$ class, denoted $WRH_\infty(\Omega)$, if there exists a constant $C>0$ such that
\begin{equation}\label{definitionWRHinfty}
\esssup_B w \leq C \dashint_{2B} w \dd \mu
\end{equation}
for every ball $B$ with $2B\Subset \Omega$.
\end{dfn}

\begin{example}\label{example:revclass}
Let $w(x)=e^x$ on $\R$ with the Lebesgue measure. Then $w\in WRH_\infty(\R)$. 
To conclude this, let $B=(a-r, a+r)$ be an interval in $\R$. Then
$$
\esssup_B w = e^{a+r},
\quad\text{and}\quad 
\dashint_{2B} w\dd x = \frac1{4r}\left(e^{a+2r}-e^{a-2r}\right).
$$
It follows that
$$
\frac{\esssup_B w}{\dashint_{2B} w\dd x } = \frac{4r e^{a+r}}{e^{a+2r}-e^{a-2r}} = \frac{4re^{3r}}{e^{4r}-1},
$$
which is a bounded function of $r>0$. On the other hand, $w\notin RH_p(\R)$ for any $1<p<\infty$, as the inequality \eqref{strongrevp} fails for intervals $B=(-r,r)$ with sufficiently large $r$.
\end{example}

From the above definitions it is immediate that a weight in $WRH_\infty(\Omega)$ belongs to $WRH_p(\Omega)$ for every $1<p<\infty$, so we have 
$$
WRH_\infty(\Omega) \subset \bigcap_{p>1} WRH_p(\Omega).
$$ 
We remark that the inclusion is proper. For instance, the weight $w(x)= \max \lbrace \log (\abs{x}^{-1}), 1 \rbrace$ on $\R$ does not belong to $WRH_\infty(\R)$ because $\esssup_{(-r,r)} w= \infty$ for $r < e^{-1}$. However, $w$ belongs to the Muckenhoupt class $A_1 $ and to $RH_p$ for every $p>1$; see \cite{MR1308005} and \cite{MR3243734}*{p.~507}. 
Furthermore, it is worth noticing that the $WRH_p(\Omega)$ classes are self-improving. Given $w\in WRH_p(\Omega)$, there exists $q>p$ such that $w\in WRH_q(\Omega)$ \cite{MR2867756}*{Theorem 3.22}.

The following lemma presents a few alternative ways to characterize the class $WRH_\infty(\Omega)$. Assertion \ref{wrhia} is precisely Theorem \ref{thm:char}~\ref{char-d} with $\alpha=1$. Assertions \ref{wrhib} and \ref{wrhic} can be compared to Theorem \ref{thm:char}~\ref{char-fujii} and \ref{char-hmm}, respectively, while being strictly stronger.
\begin{lemma}\label{lem:wrhi}
Let $\Omega\subset X$ be an open set.
\begin{enumerate}[label=\normalfont{(\roman*)}]
\item \label{wrhia} $w\in WRH_\infty(\Omega)$ is equivalent to $w_F \leq C w_{2B}$ for every measurable set $F \subset B$ with $2B \Subset \Omega$. 
\item \label{wrhib} $w\in WRH_\infty(\Omega)$ is equivalent to $M\left( w \Chi_B \right) \leq C w_{2B}$ on $B$ for every ball $B$ with $2B \Subset \Omega$.
\item \label{wrhic} $w \in WRH_\infty(\Omega)$ if and only if there exist a nondecreasing function $\phi: (1,\infty) \to (0,\infty)$ and a constant $C>0$ such that 
$$
\esssup_{B \cap \lbrace w >w_{2B} \rbrace} w \phi\left( \frac{w}{w_{2B}} \right) \leq C w_{2B}
$$
for every ball $B$ such that $2B\Subset \Omega$. In the statement, ``there exist a $\phi$\ldots and a constant $C>0$'' can be replaced with ``for all $\phi$\ldots there exists a constant $C>0$ such that\ldots''
\end{enumerate}
In the case where $w(2B)=0,$ the inequalities are trivially true for $B$.
\end{lemma}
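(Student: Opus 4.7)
I begin by dispatching the trivial case: if $w(2B) = 0$, then $w = 0$ a.e.\ on $2B$ (and hence on $B$), and all three assertions hold for this $B$ as indicated. Assume henceforth $w_{2B} > 0$.

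For \ref{wrhia}, the forward implication is immediate from $w_F \leq \esssup_B w \leq C w_{2B}$. For the converse, my plan is to test the averaged hypothesis on the ``top-level'' sets $F_t = B \cap \{w > t\}$, which have positive measure whenever $t < \esssup_B w$. This yields $t \leq w_{F_t} \leq C w_{2B}$; sending $t \nearrow \esssup_B w$ gives the $WRH_\infty$ bound (and simultaneously rules out $\esssup_B w = \infty$, since $w_{2B}$ is finite).

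For \ref{wrhib}, I would treat the backward direction first: since $B$ is open, small balls centered at a.e.\ $x \in B$ lie inside $B$, so Lebesgue's differentiation theorem gives $w(x) \leq M(w\Chi_B)(x)$ a.e. on $B$, and the hypothesis delivers the conclusion. For the forward direction, given $x \in B$ and any ball $B' \ni x$, I would estimate
$$
\frac{1}{\mu(B')} \int_{B' \cap B} w \dd \mu \leq \frac{1}{\mu(B' \cap B)} \int_{B' \cap B} w \dd \mu = w_{B' \cap B},
$$
and apply part \ref{wrhia} to the measurable set $F = B' \cap B \subset B$ (the case $\mu(B' \cap B) = 0$ being trivial).

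The substantive statement is \ref{wrhic}. For the forward direction (which in fact proves the ``for all $\phi$'' formulation in one shot), if $w \in WRH_\infty$ then on $B \cap \{w > w_{2B}\}$ we have $w_{2B} < w \leq C w_{2B}$, so $w/w_{2B} \in (1, C]$; by monotonicity of $\phi$ we obtain $w \phi(w/w_{2B}) \leq C \phi(C) w_{2B}$. For the converse direction, assuming the stated inequality for some $\phi$ and $C$, I let $s = \esssup_B w$. If $s \leq 2 w_{2B}$ there is nothing to prove; otherwise, for every $M \in (2 w_{2B}, s)$ the set $\{w > M\} \cap B$ has positive measure and lies inside $\{w > w_{2B}\}$, so the hypothesis combined with the monotonicity bound $\phi(w/w_{2B}) \geq \phi(2)$ yields $M \phi(2) \leq C w_{2B}$. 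Letting $M \nearrow s$ gives $s \leq C w_{2B}/\phi(2)$, proving $w \in WRH_\infty$. The main point to get right is extracting a uniform positive lower bound on $\phi$; monotonicity does this provided we threshold $w/w_{2B}$ strictly away from $1$, which is why the cutoff at $2 w_{2B}$ (rather than something tending to $w_{2B}$) is essential.
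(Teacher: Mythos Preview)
Your proposal is correct and follows essentially the same approach as the paper in all three parts: level sets $F_t$ for (i), the Lebesgue differentiation inequality $w \le M(w\Chi_B)$ for (ii), and the cutoff at $2w_{2B}$ with the monotonicity bound $\phi(w/w_{2B}) \ge \phi(2)$ for (iii). The only noteworthy difference is in the converse of (i): the paper, after observing that $\mu(F_t)>0$ forces $t \le Cw_{2B}$, takes a detour through the layer-cake formula to bound $(\dashint_B w^r)^{1/r}$ for all $r>1$ and then lets $r\to\infty$, whereas you simply let $t\nearrow \esssup_B w$ directly; your argument is shorter and equally rigorous. In (ii) forward you invoke part (i) on $F=B'\cap B$, while the paper uses the one-line bound $M(w\Chi_B)\le \esssup_B w$; both are fine.
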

\begin{proof} (i) Assume first that $w\in WRH_\infty(\Omega)$ and let $C>0$ be the constant in \eqref{definitionWRHinfty}. Then
$$
w(F)=\int_B w \Chi_F \dd\mu \leq \esssup_B w \int_B \Chi_F \dd\mu = \mu(F) \esssup_B w \leq C\mu(F)w_{2B},
$$
which is what we want. Conversely, assume that $w_F \leq C w_{2B}$ for every $F \subset B$. 
Let $F_t=B\cap\lbrace w>t \rbrace$ with $t>0$. By assumption
$$
\mu(F_t) t \leq w(F_t) \leq C w_{2B} \mu(F_t).
$$
This means that $\mu(F_t)>0$ implies $C w_{2B} \geq t$. Therefore, we have
\begin{align*}
\int_B w^r \dd\mu &= r \int_0^\infty t^{r-1} \mu(F_t) \dd t=r \int_0^{C w_{2B}} t^{r-1} \mu(F_t) \dd t \\
&\leq \mu(B) r \int_0^{C w_{2B}} t^{r-1} \dd t = \mu(B) \left( C w_{2B} \right)^r
\end{align*}
for every $r>1$. We thus have 
$$\left( \dashint_B w^r\dd\mu \right)^{1/r} \leq C w_{2B}
$$ 
for every $r>1$, implying \eqref{definitionWRHinfty}. 

(ii) If \eqref{definitionWRHinfty} holds, we have 
$$
M( w \Chi_B) \leq \esssup_X \left( w \Chi_B \right) = \esssup_B w \leq C w_{2B}.
$$ 
Conversely, if the condition on the right-hand side of the claim is satisfied, by applying the fact that $M( w \Chi_B ) \geq w \Chi_B$ almost everywhere in $X$, we obtain
$$
w \leq M( w \Chi_B ) \leq C w_{2B}
$$
almost everywhere in $B$. 

(iii) Let $w\in WRH_\infty(\Omega)$, and let $C$ be the constant in \eqref{definitionWRHinfty}. If $C \leq 1$, we have $ w \leq C w_{2B} \leq w_{2B}$ almost everywhere in $B$ and thus $\mu \left( B \cap \lbrace w> w_{2B} \rbrace \right)=0$, meaning that the desired estimate trivially holds. If $C >1$, then \eqref{definitionWRHinfty} gives
$$
\esssup_{B \cap \lbrace w >w_{2B} \rbrace} w \phi\left( \frac{w}{w_{2B}} \right) \leq C \phi(C) w_{2B}
$$
for every $\phi$ as in the claim.

Conversely, let $\phi$ and $C$ be as in the statement of the lemma. Then 
$$
\esssup_{B \cap \lbrace w > 2 w_{2B} \rbrace} w  
= \frac{1}{\phi(2)} \esssup_{B \cap \lbrace w >  2w_{2B} \rbrace} w \phi(2) \leq \frac{1}{\phi(2)} \esssup_{B \cap \lbrace w >  2 w_{2B} \rbrace} w \phi \left( \frac{w}{w_{2B}} \right)  \leq \frac{C }{\phi(2)}w_{2B}. 
$$
Combining this with the trivial estimate $\esssup_{B \cap \lbrace w \leq 2 w_{2B} \rbrace} w \leq 2 w_{2B}$, we end up with 
$$\esssup_{B } w \leq \left( \frac{C }{\phi(2)} +2 \right) w_{2B}.$$
\end{proof}

\section{Characterizations of weak reverse H\"older inequalities}\label{sec:char}

We are ready to take on the main theorem that characterizes the weak $A_\infty$ class in various ways. In the rest of the paper, we denote $\log^+ =\max\left\{0, \log \right\}$.

\begin{theorem}\label{thm:char}
Let $(X,d,\mu)$ be a metric measure space with a doubling measure $\mu$. Assume that $\Omega\subset X$ is an open set and that $w$ is a weight on $\Omega$. Then the following assertions are equivalent.

\begin{enumerate}[label=\normalfont{(\alph*)}]

\item\label{char-a} {\normalfont(Weak $A_\infty$)} For every $\eta>0$ there exists $\varepsilon >0$ such that if $B$ is a ball with $2B \Subset \Omega$ and $F \subset B$ is a measurable set, 
then $\mu(F) \leq \varepsilon \mu(B)$ implies that $w(F) \leq \eta w(2B)$.

\item\label{char-b} There exist $\eta, \varepsilon>0$ with $\eta < C_d^{-5}$ such that if $B$ is a ball with $2B\Subset \Omega$ and $F \subset B$ is a measurable set, 
then $\mu(F) \leq \varepsilon \mu(B)$  implies that $w(F) \leq \eta w(2B)$.

\item\label{char-c} {\normalfont(Weak reverse H\"older inequality)} There exist $p>1$ and a constant $C>0$ such that for every ball $B$ with $2B\Subset \Omega$,  it holds that 
$$
\dashint_B w^p \dd \mu \leq C \left(  \dashint_{2B} w \dd \mu \right)^p.
$$

\item\label{char-d} {\normalfont(Quantitative nondoubling $A_\infty$)} There exist constants $C, \alpha >0$ such that for every ball $B$ with $2B\Subset \Omega$ and every measurable set $F \subset B$, it holds that
$$
w(F) \leq C \left( \frac{\mu(F)}{\mu(B)} \right)^\alpha w(2B) .
$$

\item\label{char-fujii} {\normalfont(Fujii-Wilson condition)} There exists a constant $C>0$ such that
$$
\int_B M(w \Chi_B ) \dd \mu \leq C w(2B)
$$
for every ball $B$ with $2B \Subset \Omega$.

\item\label{char-dist} There exist constants $\alpha, \beta>0$ with $\beta < C_d^{-5}$ such that 
$$
w \left( B\cap\left\lbrace  w\geq  \alpha w_{2B} \right\rbrace \right) \leq \beta w(2B)
$$
 for every ball $B$ with $2B\Subset \Omega$.
 
\item\label{char-log} There exists a constant $C>0$ such that
$$
\int_B w \log^+\left(\frac{w}{w_{2B}} \right) \dd \mu \leq C w(2B)
$$
for every ball $B$ with $2B\Subset \Omega$.

\item\label{char-saw1} There exists a nondecreasing function $\phi:(0,\infty)\to(0,\infty)$ with $\phi(0^+)=0$ such that for every ball $B \subset X$ with $2 B\Subset \Omega$ and every measurable set $F \subset B$ it holds that
$$
w(F) \leq \phi \left( \frac{\mu(F)}{\mu(B)} \right) w(2B).
$$

\item\label{char-saw2} There exists a constant $C>0$ such that for every ball $B$ with $11B \Subset \Omega$ and every function $f\in \BMO(\Omega)$ with $\norm{f}_{\BMO(\Omega)} \leq 1$, it holds that
$$
\int_B\abs{ f-f_B } w \dd \mu \leq C w(2B).
$$

\item\label{char-hmm} There exist a constant $C>0$ and a nondecreasing function $\phi: (1,\infty) \to (0,\infty)$ with $\phi(\infty) = \infty$ such that 
$$
\int_{B\cap\lbrace w>w_{2B} \rbrace} w \, \phi \left(\frac{w}{w_{2B}} \right) \dd \mu \leq C w(2B)
$$
for every ball $B$ with $2B\Subset \Omega$.
\end{enumerate}
In the case where $w(2B)$ equals zero, the inequalities in \ref{char-a} -- \ref{char-hmm} are trivially satisfied for $B$.
\end{theorem}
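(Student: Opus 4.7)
The plan is a cyclic proof with backbone \ref{char-a} $\Rightarrow$ \ref{char-b} $\Rightarrow$ \ref{char-d} $\Rightarrow$ \ref{char-c} $\Rightarrow$ \ref{char-a}; the remaining conditions are then slotted into this cycle by short arguments. The first arrow is trivial: in \ref{char-a}, take the $\eta$--$\varepsilon$ pair with $\eta < C_d^{-5}$. The closing arrow \ref{char-c} $\Rightarrow$ \ref{char-a} is a single application of H\"older's inequality: for $F\subset B$ with $2B\Subset\Omega$,
$$
w(F) \leq \Bigl(\int_B w^p \dd\mu\Bigr)^{1/p}\mu(F)^{1/p'} \leq C\, w(2B)\,\Bigl(\frac{\mu(F)}{\mu(B)}\Bigr)^{1/p'},
$$
which as a bonus produces \ref{char-d} with $\alpha = 1/p'$. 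The implication \ref{char-d} $\Rightarrow$ \ref{char-c} is a standard layer-cake estimate on $\int_B w^p\dd\mu$: setting $F_t = B\cap\lbrace w>t\rbrace$, the elementary bound $t\mu(F_t)\leq w(F_t)$ combined with \ref{char-d} forces $\mu(F_t) \leq [C\,w(2B)/(t\mu(B)^\alpha)]^{1/(1-\alpha)}$, and integrating against $p t^{p-1}\dd t$ on $(w_{2B},\infty)$ yields \ref{char-c} for every $1 < p < 1/(1-\alpha)$.

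The technical heart of the argument is \ref{char-b} $\Rightarrow$ \ref{char-d}, which must upgrade a single small-measure implication into a quantitative power decay. The idea is to iterate \ref{char-b} using Lemma \ref{claimsigmacoveringballs}: at each step, cover the current ball by a bounded number $N = N(C_d)$ of sub-balls, partition $F$ accordingly, and reapply \ref{char-b} inside each piece, controlling the overlap of the corresponding doubled balls through the doubling condition. After $k$ iterations one obtains $w(F)\leq (N\eta)^k w(2B)$ whenever $\mu(F)\leq \varepsilon^k\mu(B)$, and interpolating in $k$ delivers \ref{char-d} with exponent $\alpha = \log(1/(N\eta))/\log(1/\varepsilon) > 0$. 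The quantitative threshold $\eta < C_d^{-5}$ in \ref{char-b} is precisely the margin needed for $N\eta < 1$, so that the iteration actually converges.

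With the core cycle closed, the remaining conditions slot in quickly. \ref{char-dist} is the special case $F = B\cap\lbrace w\geq \alpha w_{2B}\rbrace$ of \ref{char-d}, while \ref{char-dist} $\Rightarrow$ \ref{char-b} by splitting a general $F$ along this level set. The Fujii--Wilson equivalence \ref{char-c} $\Leftrightarrow$ \ref{char-fujii} is the content of the forthcoming Proposition \ref{charc}, based on the weak-type inequality of Proposition \ref{thm:weaktype}. For the chain \ref{char-c} $\Rightarrow$ \ref{char-log} $\Rightarrow$ \ref{char-a}, the first step expands $\log^+$ through its integral representation and applies \ref{char-c} after H\"older, while the second bounds $w(F) \leq \lambda w_{2B}\mu(F) + C w(2B)/\log\lambda$ by splitting $F$ at level $\lambda w_{2B}$ and optimises $\lambda$. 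Conditions \ref{char-saw1} and \ref{char-hmm} are abstractions of \ref{char-d} and \ref{char-log} to arbitrary nondecreasing profiles $\phi$: the specializations $\phi(s) = Cs^\alpha$ and $\phi(s) = \log s$ recover those conditions, and the hypotheses $\phi(0^+) = 0$, $\phi(\infty) = \infty$ are exactly what is needed to extract \ref{char-a} by the same truncation device. Finally, \ref{char-saw2} is derived from \ref{char-c} by H\"older together with John--Nirenberg on $(X,d,\mu)$, and its converse applies \ref{char-saw2} to $f = \alpha\log w$ after invoking the BMO--$A_p$ correspondence recalled in the preliminaries to ensure $\log w\in\BMO$.

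The principal obstacle is clearly the iteration in \ref{char-b} $\Rightarrow$ \ref{char-d}: in the absence of dyadic cubes on a general doubling metric measure space, one must substitute Vitali-type coverings whose multiplicities are only controlled by a power of $C_d$, and keeping track of these losses across the iteration is exactly the role of the sharp quantitative threshold $\eta < C_d^{-5}$. A secondary subtlety is the BMO characterization \ref{char-saw2}, whose converse relies a priori on knowing $\log w \in \BMO$, a fact that itself requires the other equivalences; this is why \ref{char-saw2} is most naturally treated after the main cycle has been established.
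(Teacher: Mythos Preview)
Two genuine gaps.

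First, the iteration for \ref{char-b} $\Rightarrow$ \ref{char-d} does not close. Covering $B$ by $N$ sub-balls $B_i$ via Lemma \ref{claimsigmacoveringballs} and applying \ref{char-b} in each yields at best $w(F)\le\sum_i w(F\cap B_i)\le N\eta\,w(2B)$, a single factor of $\eta$; there is no mechanism by which a further round produces another factor, because the hypothesis $\mu(F)\le\varepsilon^k\mu(B)$ already implies $\mu(F\cap B_i)\le\varepsilon\mu(B_i)$ for every $k\ge1$, so \ref{char-b} simply fires once and stops. Any genuine iteration must pass through intermediate sets (not balls) on which one can again invoke \ref{char-b}, and since $w$ is not doubling one cannot trade $w$-mass between nested or overlapping balls. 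Moreover, $\eta<C_d^{-5}$ is \emph{not} the condition $N\eta<1$ for the $N$ of Lemma \ref{claimsigmacoveringballs}, which can be far larger than $C_d^{5}$. In the paper the hard step is \ref{char-b} $\Rightarrow$ \ref{char-c} directly, via a Gehring-type stopping-time argument (Lemma \ref{mainlemma}): at each Lebesgue point of $rB\cap\{w\ge\gamma D\}$ one selects the first radius $s_x$ at which the average of $w$ drops to $D$, applies \ref{char-b} on $B(x,5r_x)$ with $r_x=s_x/10$, and after Vitali obtains
\[
\int_{rB\cap\{w\ge\gamma D\}}w\dd\mu\le\gamma^{-\beta}\int_{(r+\lambda)B\cap\{w\ge\gamma^{-1}D\}}w\dd\mu.
\]
Iterating across levels $D_k=\gamma^{2k}w_{2B}$ with $\sum\lambda_k<1/2$ gives \ref{char-c}. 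The threshold $C_d^{-5}$ arises because the comparison $\mu(B(x,2s_x))\le C_d^{5}\mu(B(x,r_x))$ (a factor $20$ in radius) is exactly what enters the key estimate.

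Second, your route \ref{char-saw2} $\Rightarrow$ \ref{char-a} via $f=\alpha\log w$ fails outright: weak $A_\infty$ weights need not have $\log w\in\BMO$ (Example \ref{example:revclass}: $w(x)=e^x$ gives $\log w(x)=x\notin\BMO(\R)$), and may vanish on sets of positive measure so that $\log w$ is undefined. Establishing the main cycle first does not rescue this, since the implication $\text{weak }A_\infty\Rightarrow\log w\in\BMO$ is simply false. The paper instead tests \ref{char-saw2} against the Coifman--Rochberg function $f=\log^+\!\bigl(\mu(B)\mu(F)^{-1}M(\Chi_F)\bigr)$, which lies in $\BMO(X)$ with norm bounded by a dimensional constant, and extracts \ref{char-a} from the resulting inequality combined with the weak-type bound of Proposition \ref{thm:weaktype}.
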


In Section \ref{weakRHI} we show that the assertions \ref{char-a} -- \ref{char-c} are equivalent. Once this has been shown, it follows from Proposition \ref{prop-easy} that \ref{char-d} is also equivalent to these three. The remainder of this section is dedicated to showing that the assertions \ref{char-fujii} -- \ref{char-hmm} are equivalent to \ref{char-a} -- \ref{char-d}. It is most convenient to start by assuming either \ref{char-c} or \ref{char-d}, and end up with one of the qualitative claims \ref{char-a} or \ref{char-b}. 
Assertions \ref{char-saw1} and \ref{char-saw2} appear in a Euclidean setting in the early article \cite{MR654182}, while \ref{char-log} is inspired by \cite{duo_paseky}*{Theorem 5.1} for $A_\infty$ weights. 

%%%%%%%%%% %%%%%%%%%% %%%%%%%%%%

\begin{propo}\label{prop-easy}
\ref{char-c} $\implies$ \ref{char-d} $\implies$ \ref{char-a}
\end{propo}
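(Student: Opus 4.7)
The plan is straightforward: both implications follow from short manipulations, with the only substantive step being a Hölder inequality computation for the first arrow.

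For \ref{char-c} $\implies$ \ref{char-d}, I would fix a ball $B$ with $2B \Subset \Omega$, a measurable set $F \subset B$, and apply Hölder's inequality with conjugate exponents $p$ and $p/(p-1)$ to $w(F) = \int_B w\,\Chi_F \dd\mu$. This yields
$$
w(F) \leq \left(\int_B w^p \dd\mu\right)^{1/p} \mu(F)^{(p-1)/p}.
$$
Invoking \ref{char-c} in the form $\int_B w^p \dd\mu \leq C\,\mu(B) \bigl(w(2B)/\mu(2B)\bigr)^p$ and using $\mu(B) \leq \mu(2B)$, a rearrangement produces
$$
w(F) \leq C^{1/p}\, w(2B) \left(\frac{\mu(F)}{\mu(B)}\right)^{(p-1)/p},
$$
which is exactly \ref{char-d} with $\alpha = (p-1)/p$ and constant $C^{1/p}$.

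For \ref{char-d} $\implies$ \ref{char-a}, the argument is purely algebraic. Given $\eta > 0$, I would choose $\varepsilon = (\eta/C)^{1/\alpha}$, where $C$ and $\alpha$ are the constants from \ref{char-d}. Then for any ball $B$ with $2B \Subset \Omega$ and any measurable $F \subset B$ satisfying $\mu(F) \leq \varepsilon \mu(B)$, \ref{char-d} immediately gives $w(F) \leq C \varepsilon^\alpha w(2B) = \eta\, w(2B)$. The case $w(2B) = 0$ is trivial under either hypothesis, so no separate treatment is needed.

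There is no real obstacle in this proposition; it is a bookkeeping exercise whose only content is noting that the $A_\infty$-type decay in \ref{char-d} is the quantitative (power) version of the $(\varepsilon,\eta)$-statement \ref{char-a}, and that Hölder's inequality converts a reverse $L^p$ bound into precisely this power decay with exponent $(p-1)/p$. The harder direction, going back from the qualitative \ref{char-a} to the weak reverse Hölder inequality \ref{char-c}, is deferred to Section \ref{weakRHI} and is not part of the present proposition.
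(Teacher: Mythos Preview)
Your proposal is correct and follows essentially the same route as the paper's proof: H\"older's inequality with exponents $p$ and $p/(p-1)$ plus \ref{char-c} for the first implication, and the choice $\varepsilon = (\eta/C)^{1/\alpha}$ for the second. The only cosmetic difference is that you keep track of the constant as $C^{1/p}$ whereas the paper absorbs it into a generic $C$.
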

\begin{proof}
To conclude the first implication, we apply H\"older's inequality and \ref{char-c} to obtain
\begin{align*}
w(F) &= \int_B \Chi_Fw\dd\mu \leq \left(\int_B\Chi_F\dd\mu\right)^{1-\frac{1}{p}}\left(\int_B w^p \dd \mu \right)^\frac{1}{p}
\leq C\mu(F)\left(\frac{\mu(B)}{\mu(F)}\right)^\frac{1}{p}\dashint_{2B}w\dd\mu\\
&= C\mu(F)\left(\frac{\mu(B)}{\mu(F)}\right)^\frac{1}{p}\frac{w(2B)}{\mu(2B)}
\leq Cw(2B)\left(\frac{\mu(F)}{\mu(B)}\right)^{1-\frac{1}{p}}.
\end{align*}
The second implication is immediate by choosing $\varepsilon = \left(\eta C^{-1}\right)^{1/\alpha}$, where $C,\alpha$ are the constants in \ref{char-d}. 
\end{proof}

Anderson, Hyt\"onen, and Tapiola \cite{MR3606546} show that \ref{char-fujii} $\implies$ \ref{char-c} using weak weights defined on Christ-type dyadic systems of cubes. We prove that \ref{char-fujii} $\implies$ \ref{char-a} by elementary arguments; Section \ref{weakRHI} will complete the proof of the equivalence. 
\begin{propo}\label{charc}
\ref{char-c} $\implies$ \ref{char-fujii} $\implies$ \ref{char-a}
\end{propo}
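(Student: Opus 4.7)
The plan is to establish the two implications separately.

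\emph{For \ref{char-c}$\,\Rightarrow\,$\ref{char-fujii}.} Fix a ball $B$ with $2B\Subset\Omega$. The idea is to combine the strong $(p,p)$-boundedness of the Hardy--Littlewood maximal operator $M$ on $L^p(X)$---valid for all $p>1$ on any doubling metric measure space---with H\"older's inequality. By \ref{char-c}, $w\Chi_B\in L^p(X)$ with $\int_B w^p\,\dd\mu\leq C\mu(B)w_{2B}^p$. The $L^p$-bound applied to $w\Chi_B$ yields $\int_B M(w\Chi_B)^p\,\dd\mu\leq C\int_B w^p\,\dd\mu$, and H\"older's inequality with exponents $p$ and $p/(p-1)$ gives
\[
\int_B M(w\Chi_B)\,\dd\mu\leq\mu(B)^{1-1/p}\left(\int_B M(w\Chi_B)^p\,\dd\mu\right)^{1/p}.
\]
Combining these estimates with $\mu(B)\leq\mu(2B)$ produces $\int_B M(w\Chi_B)\,\dd\mu\leq Cw(2B)$, which is \ref{char-fujii}.

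\emph{For \ref{char-fujii}$\,\Rightarrow\,$\ref{char-a}.} Given $\eta>0$, we seek a suitable $\varepsilon>0$. Let $B$ be a ball with $2B\Subset\Omega$ and $F\subset B$ measurable with $\mu(F)\leq\varepsilon\mu(B)$. By the Lebesgue differentiation theorem, $w\leq M(w\Chi_B)$ almost everywhere on $B$, so $w(F)\leq\int_F M(w\Chi_B)\,\dd\mu$. The plan is to decompose this integral at a large threshold $\lambda=Kw_{2B}$, with $K$ to be chosen in terms of $\eta$:
\[
w(F)\leq\lambda\mu(F)+\int_{F\cap\{M(w\Chi_B)>\lambda\}}M(w\Chi_B)\,\dd\mu\leq K\varepsilon w(2B)+T_K,
\]
where the first estimate uses $\lambda\mu(F)\leq Kw_{2B}\varepsilon\mu(B)\leq K\varepsilon w(2B)$ and $T_K$ denotes the tail term.

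To control $T_K$, I would iterate the decomposition across dyadic heights $\lambda_n=2^nKw_{2B}$ with level sets $E_n=\{M(w\Chi_B)>\lambda_n\}\cap B$. Chebyshev's inequality applied to \ref{char-fujii} gives $\mu(E_n)\leq Cw(2B)/\lambda_n$, i.e., geometric measure decay in $n$. Writing $T_K=\sum_{n\geq 0}\int_{F\cap(E_n\setminus E_{n+1})}M(w\Chi_B)\,\dd\mu$, using the pointwise bound $M(w\Chi_B)\leq\lambda_{n+1}$ on each annular set, and carefully balancing $\min(\mu(F),\mu(E_n))$ against the $L^1$-bound from \ref{char-fujii} to telescope the tail, one aims for a quantitative estimate of the form $T_K\leq C'w(2B)/K^{\alpha}$ with some $\alpha>0$. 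One then selects $K$ large enough that $C'/K^{\alpha}\leq\eta/2$, and subsequently $\varepsilon\leq\eta/(2K)$, to conclude $w(F)\leq\eta w(2B)$. The main obstacle is precisely this quantitative tail decay in $K$: the naive bound via \ref{char-fujii} gives only $T_K\leq Cw(2B)$, uniform in $K$ but not decaying, so the dyadic iteration---exploiting simultaneously both the measure decay $\mu(E_n)\leq Cw(2B)/\lambda_n$ and the integral $L^1$-control $\int_B M(w\Chi_B)\leq Cw(2B)$---is essential.
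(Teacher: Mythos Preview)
Your argument for \ref{char-c}$\,\Rightarrow\,$\ref{char-fujii} is correct and coincides with the paper's.

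For \ref{char-fujii}$\,\Rightarrow\,$\ref{char-a} there is a real gap, and you have essentially identified it yourself. The Fujii--Wilson condition gives only $L^1$-control of $M(w\Chi_B)$ over $B$, namely $\int_B M(w\Chi_B)\,\dd\mu\leq Cw(2B)$; together with Chebyshev this yields $\mu(E_n)\leq Cw(2B)/\lambda_n$, but nothing more. On the annular set $E_n\setminus E_{n+1}$ you bound $M(w\Chi_B)\leq\lambda_{n+1}=2\lambda_n$, so the $n$th term of your dyadic sum is at most $2\lambda_n\min(\mu(F),\mu(E_n))$. Using $\mu(E_n)$ gives a constant $2Cw(2B)$ independent of $n$, so the tail diverges; using $\mu(F)$ gives $2\lambda_n\mu(F)$, which grows geometrically. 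No ``balancing'' of these two bounds, and no appeal to the full $L^1$-bound, produces a uniform estimate $T_K\leq C'w(2B)/K^\alpha$: that would force $M(w\Chi_B)$ into a weak-$L^q$ space for some $q>1$ uniformly in $B$, which is not a consequence of \ref{char-fujii} alone. In short, you are trying to extract higher integrability of $M(w\Chi_B)$ from an $L^1$ bound, and that is impossible without an additional mechanism.

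The missing mechanism in the paper is a \emph{reverse} weak-type estimate for $M$ (Lemma~\ref{reverseweaktypeestimate}): for $\lambda>w_B$,
\[
\int_{B\cap\{M(w\Chi_B)>\lambda\}} w\,\dd\mu\ \leq\ C_0\,\lambda\,\mu\bigl(\{M(w\Chi_B)>\lambda\}\bigr).
\]
The point is to integrate $w$, not $M(w\Chi_B)$, over the superlevel set. Dividing by $\lambda$ and integrating in $\lambda$ over $(C_2w_{2B},\infty)$, the right-hand side becomes exactly the layer-cake tail of $\int_{2B}M(w\Chi_B)\,\dd\mu$, which \ref{char-fujii} bounds by $Cw(4B)$, while the left-hand side becomes $\int_B w\log^+\bigl(w/(C_2w_{2B})\bigr)\,\dd\mu$. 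This logarithmic gain is what does the work: splitting $F$ at height $\gamma C_2w_{2B}$ then gives
\[
w(F)\ \leq\ \gamma C_2\varepsilon\, w(2B)\ +\ \frac{C_3}{\log\gamma}\,w(4B),
\]
and one chooses first $\gamma$ large, then $\varepsilon$ small. A final covering argument (Lemma~\ref{claimsigmacoveringballs}) passes from $4B$ to $2B$. Your decomposition at height $Kw_{2B}$ is the right shape, but the tail must be handled through $w$ and the reverse weak-type inequality, not through $M(w\Chi_B)$ directly.
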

\begin{proof}
The implication \ref{char-c} $\implies$ \ref{char-fujii} is a consequence of the $(p,p)$-strong type inequality for the maximal function. Let $p>1$ as in \ref{char-c}, and $B $ a ball with $2B \Subset \Omega$. Then
\begin{align*}
\int_B M( w \Chi_B) \dd \mu 
&=  \int_B \left( M( w \Chi_B)^p \right)^\frac{1}{p} \dd \mu 
\leq \mu(B)^{\frac{p-1}{p}} \left( \int_X M\left( w \Chi_B \right)^p \dd \mu \right)^{\frac{1}{p}} \\
&\leq \mu(B)^{\frac{p-1}{p}} \left( C(p,C_d) \int_X  \left( w \Chi_B \right)^p \dd \mu \right)^{\frac{1}{p}} 
=C(p,C_d)^\frac{1}{p}\mu(B)\left( \dashint_B w^p \dd \mu \right)^{\frac{1}{p}} \\
&\leq C(p,C_d)^\frac{1}{p} C\mu(B)\dashint_{2B} w \dd \mu \leq C(p,C_d)^\frac{1}{p} C  w(2B). 
\end{align*}
The proof of \ref{char-fujii} $\implies$ \ref{char-a} is more involved, and broken into several lemmas for the reader's convenience. We start with the following reverse $(1,1)$-weak type estimate for the maximal function. 
\begin{lemma}\label{reverseweaktypeestimate}
Let $f\in L^1(X)$ be a nonnegative function, $B\subset X$ a ball, and $\lambda >  \dashint_B f \dd\mu$. There exists a constant $C_0=C_0(C_d)>1$ such that
$$
\int_{B\cap \lbrace Mf >\lambda \rbrace} f \dd\mu \leq C_0 \lambda \mu \left( \lbrace Mf > \lambda \rbrace \right).
$$
\end{lemma}
\begin{proof}[Proof of Lemma \ref{reverseweaktypeestimate}]
In the case $\lbrace Mf \leq \lambda \rbrace = \emptyset$, the left-hand side equals $\int_B f \dd\mu $ and the right-hand side equals $C_0 \lambda \mu(X)$, and the estimate trivially holds.

Then assume that $\lbrace Mf \leq \lambda \rbrace \neq \emptyset$. 
Denote $E_\lambda= B\cap\lbrace Mf > \lambda\rbrace$.
Since the maximal function is lower semicontinuous, the set $E_\lambda$ is open; see \cite{MR2867756}*{Lemma 3.12}. 
For every $x\in E_\lambda$ consider the open ball $B(x,r_x)$ with 
$r_x = \dist\left( x, \lbrace Mf \leq \lambda \rbrace \right)>0$.
We claim that there exists a disjoint and countable subfamily $\lbrace B_i=B(x_i,r_i) \rbrace_i$ of $\lbrace B(x,r_x) \rbrace_{x\in E_\lambda}$ such that $E_\lambda \subset \bigcup_i 5 B_i$. 
Indeed, if $\sup_{x\in E_\lambda} r_x= \infty$, then it is enough to take a single ball $B(x,r_x)$ such that $x\in E_\lambda$ and $r_x>\rad(B)$. 
On the other hand, if $\sup_{x\in E_\lambda} r_x< \infty$, then the Vitali covering lemma (\cite{MR2867756}*{Lemma 1.7}) provides us with the desired subfamily.

Consider the collection $\lbrace B_i\rbrace_i$ with the above properties. Observe that for every $i$, it holds that $ 5B_i \cap \lbrace Mf \leq  \lambda \rbrace  \neq \emptyset$. Consequently, there exists $y\in 5 B_i$ with 
$$
\dashint_{5B_i} f \dd\mu \leq Mf(y)  \leq \lambda.
$$ 

On the other hand, it is immediate that $B_i \subset \lbrace Mf > \lambda \rbrace$ for every $i$. Bearing in mind all these observations, we obtain
\begin{align*}
\int_{E_\lambda} f \dd\mu &\leq \sum_i \int_{5 B_i} f \dd\mu = \sum_i \mu(5B_i) \dashint_{5 B_i} f \dd\mu \leq C_0(C_d) \lambda \sum_i \mu(B_i) \\
&= C_0 \lambda \mu \Bigl( \bigcup_i B_i \Bigr)  
\leq  C_0 \lambda \mu \left( \lbrace Mf > \lambda \rbrace \right).
\end{align*}
\end{proof}

To localize the distributional sets of the maximal function, we apply the following lemma. 
\begin{lemma}\label{localizationX2B}
Let $B\subset X$ be a ball, and $w\in L^1(B)$ a nonnegative function. 
There exists a constant $C_1=C_1(C_d)>1$ such that 
$$M( w \Chi_B ) \leq C_1 w_{B}$$ 
everywhere in $X \setminus 2 B$.
\end{lemma}
\begin{proof}[Proof of Lemma \ref{localizationX2B}]
Let $x\in X \setminus 2B$, and let $B' \subset X$ be a ball with $B \cap B' \neq \emptyset$ and $x\in B'$. Denote by $r$ and $r'$ the radii of $B$ and $B'$ respectively, and choose a point $y\in B \cap B'$. Then $r'\geq r/2$, since otherwise we would have $\dist(x,y) \leq \diam(B') \leq 2r' <r$ and, denoting by $z_B$ the center of $B$, 
$$
\dist(x,y) \geq \dist(x,z_B)-\dist(y,z_B) \geq 2r-r =r,
$$
which is a contradiction. The fact that $r'\geq r/2$ easily implies that $B \subset 5 B'$, and thus $\mu(B') \geq c(C_d) \mu (5 B') \geq c \mu(B)$.
By the definition of the maximal function we obtain
$$
M( w \Chi_B ) (x) = \sup_{ \substack{B' \ni x\\ B \cap B' \neq \emptyset}} \frac{1}{\mu(B')}\int_{B \cap B'} w \dd\mu \leq w(B)  \sup_{ \substack{B' \ni x\\ B \cap B' \neq \emptyset}} \frac{1}{\mu(B')} \leq \frac{w(B)}{c \mu(B)},
$$
whereby the statement is proven. 
\end{proof}

Combining the previous two lemmas we obtain an estimate, which can be regarded as a weak version of \ref{char-log}. 
\begin{lemma}\label{weaklog4B}
Assume that $w$ satisfies \ref{char-fujii} with the constant $C$. There exist constants $C_2=C_2(C_d)$ and $C_3=C_3(C_d,C)>0$ such that 
$$
\int_B w \log^+ \left( \frac{w}{C_2 w_{2B}} \right)\dd\mu \leq C_3 w(4B)
$$
for every ball $B$ with $4B \Subset \Omega$.
\end{lemma}
\begin{proof}[Proof of Lemma \ref{weaklog4B}]
By Lemma \ref{localizationX2B} there exists a constant $C_1=C_1(C_d)$ such that 
$$
M( w \Chi_B ) \leq C_1 w_{B}
$$ 
in $X\setminus 2B$.
Let $C_2=C_2(C_d)>0$ be such that $C_2 w_{2B} \geq C_1 w_B$, and let $\lambda > C_2 w_{2B}$.  
Lemma \ref{reverseweaktypeestimate} implies that
\begin{align*} 
\int_{B\cap \lbrace w >\lambda \rbrace} w \dd\mu 
&\leq \int_{B\cap \lbrace M(w \Chi_B) >\lambda \rbrace} w \dd\mu
\leq C_0 \lambda \mu \left( \lbrace M(w \Chi_B) > \lambda \rbrace \right)\\
&= C_0 \lambda \mu \left(2B\cap \lbrace M(w \Chi_B) > \lambda \rbrace \right).
\end{align*}
With these remarks, we obtain
\begin{align*}
\int_{2B} M(w \Chi_B) \dd\mu
&\geq \int_{C_2 w_{2B}}^\infty \mu \left(2B\cap \lbrace M(w \Chi_B) > \lambda \rbrace \right) \dd \lambda 
\geq \frac{1}{C_0} \int_{C_2 w_{2B}}^\infty \frac{1}{\lambda} \int_{B\cap \lbrace w>\lambda \rbrace} w \dd\mu \dd\lambda \\
&= \frac{1}{C_0} \int_{ B\cap \lbrace w >C_2w_{2B} \rbrace} w \int_{C_2w_{2B}}^{w(x)} \frac{1}{\lambda} \dd \lambda \dd \mu
= \frac{1}{C_0}\int_B w \log^+ \left( \frac{w}{C_2 w_{2B}} \right)\dd\mu.
\end{align*}
If $B$ is a ball with $4B \Subset \Omega$, by assumption
$$
\int_{2B} M(w \Chi_{2B}) \dd \mu \leq C w(4B),
$$ 
and we conclude that
$$
\int_B w \log^+ \left( \frac{w}{C_2 w_{2B}} \right) \dd\mu \leq  C_0 \int_{2B} M(w \Chi_{B} ) \dd\mu \leq C_0 \int_{2B} M(w \Chi_{2B} ) \dd\mu \leq C_0 C w(4B).
$$
\end{proof}

With the previous lemma in hand, we are able to prove \ref{char-a} with $4B$ instead of $2B$. 
\begin{lemma}\label{weaknondoubling4B}
For every $\widetilde{\eta}>0$ there exists $\widetilde{\varepsilon}>0$ such that for every measurable set $F \subset B$ with $4 B \Subset \Omega$ and $\mu(F) \leq \widetilde{\varepsilon} \mu(B)$, we have $w(F) \leq \widetilde{\eta} w(4B)$. 
\end{lemma}
\begin{proof}[Proof of Lemma \ref{weaknondoubling4B}]
Given $\widetilde{\eta}>0$, choose $\gamma>1$ large enough so that $C_3/\log(\gamma) \leq \widetilde{\eta}/2$, and $\widetilde{\varepsilon}>0$ small enough so that $\widetilde{\varepsilon} C_2 \gamma \leq \widetilde{\eta}/2$. Now let $B$ be a ball with $4B \Subset \Omega$, and $F\subset B$ a measurable set with $\mu(F) \leq \widetilde{\varepsilon} \mu(B)$. 
Then we have
$$
w\left( F \cap \lbrace w \leq \gamma C_2 w_{2B} \rbrace \right) \leq C_2 \gamma w_{2B}\mu(F) \leq \widetilde{\varepsilon} \mu(B) C_2 \gamma w_{2B} \leq \frac{\widetilde{\eta}}{2} w(2B) \leq \frac{\widetilde{\eta}}{2} w(4B).
$$
Furthermore, by Lemma \ref{weaklog4B}, we obtain
\begin{align*}
w\left( F \cap \lbrace w > \gamma C_2 w_{2B} \rbrace \right) 
&=\frac{1}{\log(\gamma)} \int_{F \cap \lbrace w > \gamma C_2 w_{2B} \rbrace} w \log(\gamma) \dd\mu\\
&\leq \frac{1}{\log(\gamma)} \int_{F \cap \lbrace w > \gamma C_2 w_{2B} \rbrace} w \log \left( \frac{w}{C_2 w_{2B}} \right) \dd\mu \\
&\leq \frac{1}{\log(\gamma)} \int_{F \cap \lbrace w >  C_2 w_{2B} \rbrace} w \log \left( \frac{w}{C_2 w_{2B}} \right) \dd\mu\\
&\leq \frac{C_3 w(4B)}{\log(\gamma)} \leq \frac{\widetilde{\eta}}{2} w(4B).
\end{align*}
Combining both estimates we conclude that $w(F) \leq \widetilde{\eta} w(4B)$. 
\end{proof}

Finally, we are set to complete the proof of \ref{char-fujii} $\implies$ \ref{char-a}. To this end, let $B$ be a ball such that $2B \Subset \Omega$, and let $F \subset B$ be a measurable set. Let $\lbrace B_i \rbrace_{i=1}^N$ be the collection of balls from Lemma \ref{claimsigmacoveringballs} with $\sigma=1/5$. Observe that $4 B_i \subset 2 B \Subset \Omega$ and that $\mu(B_i) \geq \widetilde{c}(C_d) \mu(B)$, because the center of each $B_i$ is contained in $B$. Also note that the number $N$ of balls only depends on $C_d$. 

For any $\eta>0$ denote $\widetilde{\eta} = N \eta$, and let $\widetilde{\varepsilon}$ be the parameter associated with $\widetilde{\eta}$ from Lemma \ref{weaknondoubling4B}. Let $\varepsilon= \widetilde{c}(C_d) \widetilde{\varepsilon}$. If $\mu(F) \leq \varepsilon \mu(B)$, then
$$
\frac{\mu(F \cap B_i)}{\mu(B_i)} \leq \frac{\mu(F)}{\widetilde{c}(C_d) \mu(B)} \leq \widetilde{\varepsilon},
$$
which by Lemma \ref{weaknondoubling4B} implies that $w( F \cap B_i) \leq \widetilde{\eta} w( 4B_i) \leq \widetilde{\eta} w( 2B)$ for every $i=1, \ldots, N$. We conclude that
$$
w(F) \leq \sum_{i=1}^N w( F \cap B_i ) \leq \widetilde{\eta} \sum_{i=1}^N w( 4B_i)  \leq \widetilde{\eta} N w(2B) = \eta w(2B).
$$
\end{proof}

\begin{propo}
\ref{char-b} $\iff$ \ref{char-dist}
\end{propo}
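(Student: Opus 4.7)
The plan is to prove both implications by elementary one-line decompositions of $F$ around the level $\{w\geq \alpha w_{2B}\}$. I will assume throughout $w(2B)>0$, since otherwise both conditions hold trivially. The bridge between the two statements is the set $F_\alpha := B\cap\{w\geq \alpha w_{2B}\}$: Chebyshev controls its $\mu$-measure, while its $w$-measure is exactly the left-hand side of \ref{char-dist}.

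For \ref{char-b} $\Rightarrow$ \ref{char-dist}, I would take $\eta<C_d^{-5}$ and $\varepsilon>0$ from \ref{char-b} and choose the auxiliary parameter $\alpha$ afterwards. Chebyshev's inequality together with the trivial bound $w(F_\alpha)\leq w(B)\leq w(2B)=w_{2B}\,\mu(2B)$ gives
$$
\alpha\, w_{2B}\, \mu(F_\alpha)\leq w(F_\alpha)\leq w_{2B}\,\mu(2B), \qquad \text{hence}\qquad \mu(F_\alpha)\leq \frac{\mu(2B)}{\alpha}\leq \frac{C_d\,\mu(B)}{\alpha}.
$$
Fixing $\alpha:=C_d/\varepsilon$ makes the right-hand side $\varepsilon\mu(B)$, so \ref{char-b} produces $w(F_\alpha)\leq \eta\, w(2B)$. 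Setting $\beta:=\eta$ then yields constants that satisfy \ref{char-dist}.

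For the converse \ref{char-dist} $\Rightarrow$ \ref{char-b}, I would take $\alpha$ and $\beta<C_d^{-5}$ from \ref{char-dist} and, given $F\subset B$ with $\mu(F)\leq \varepsilon\mu(B)$ (for $\varepsilon$ yet to be chosen), split $F$ at the threshold $\alpha w_{2B}$. The low part is handled directly:
$$
w\bigl(F\cap\{w<\alpha w_{2B}\}\bigr)\leq \alpha w_{2B}\,\mu(F) = \alpha\,\frac{\mu(F)}{\mu(2B)}\,w(2B) \leq \alpha\varepsilon\, w(2B),
$$
while the high part uses \ref{char-dist}: $w(F\cap\{w\geq \alpha w_{2B}\})\leq w(F_\alpha)\leq \beta\, w(2B)$. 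Summing gives $w(F)\leq (\alpha\varepsilon + \beta)\, w(2B)$.

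The only point that needs care is the quantitative constraint $\eta,\beta<C_d^{-5}$. In the first direction this is free, since I simply put $\beta=\eta$. In the second direction I would exploit the strict inequality $\beta<C_d^{-5}$ to choose $\varepsilon>0$ small enough that $\eta:=\alpha\varepsilon+\beta$ still lies below $C_d^{-5}$. No covering lemma, iteration, or self-improvement is required, and this is arguably the cheapest step in the whole chain of equivalences.
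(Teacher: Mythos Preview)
Your proof is correct and follows essentially the same route as the paper's: in both directions you use Chebyshev to control $\mu(F_\alpha)$ and then split $F$ at the level $\alpha w_{2B}$, with $\beta=\eta$ in one direction and $\eta=\alpha\varepsilon+\beta$ (with $\varepsilon$ chosen small enough that $\eta<C_d^{-5}$) in the other. The paper simply makes the latter choice explicit, taking $\varepsilon=(2\alpha)^{-1}(C_d^{-5}-\beta)$ so that $\eta=\tfrac12(C_d^{-5}+\beta)$.
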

\begin{proof}
To show \ref{char-b} $\implies$ \ref{char-dist}, let $\alpha=\varepsilon^{-1} C_d$, $\beta =\eta$, and $F= B\cap\left\lbrace w\geq  \alpha w_{2B} \right\rbrace$. 
We may assume that $w(2B)>0$, since otherwise the estimate is trivial. We thus have
$$
w(2B) \geq w(F) \geq \alpha w_{2B} \mu(F),
$$
which implies that 
$$
\mu(F) \leq \alpha^{-1} \mu(2B) \leq  \alpha^{-1} C_d \mu(B) = \varepsilon \mu(B).
$$ 
By \ref{char-b} we obtain $w(F) \leq \eta w(2B) = \beta w(2B)$. 

As for the reverse implication, let $\varepsilon= (2\alpha)^{-1}\left(C_d^{-5}-\beta\right)$ and $\eta= 2^{-1}\left(C_d^{-5}+\beta\right)$. Notice that $\varepsilon, \eta >0$, and $\eta< C_d^{-5}$. 
For a measurable set $F\subset B$ with $\mu(F) \leq \varepsilon \mu(B)$, we obtain
\begin{align*}
w(F) & \leq w\left(F\cap \left\lbrace w\geq \alpha w_{2B} \right\rbrace \right) + w\left(F\cap \left\lbrace w\leq \alpha w_{2B} \right\rbrace \right)  \\
& \leq 
w\left(B\cap \left\lbrace w\geq  \alpha w_{2B} \right\rbrace \right)
 +  \alpha \mu(F) w_{2B}\\
 & \leq \beta w(2B) + \alpha \varepsilon w(2B) = \eta w(2B).
\end{align*} 
\end{proof}

\begin{propo}\label{CharLog}
\ref{char-d} $\implies$ \ref{char-log} $\implies$ \ref{char-b}
\end{propo}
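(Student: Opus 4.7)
The plan is to handle the two implications separately. Both will rely on writing $\int_B w \log^+(w/w_{2B})\dd\mu$ in its layer-cake form
$$
\int_B w \log^+\!\!\left(\frac{w}{w_{2B}}\right)\dd\mu = \int_1^\infty \frac{w(F_s)}{s}\dd s, \qquad F_s := B\cap\{w>s\,w_{2B}\},
$$
which is the natural bridge between the distributional condition \ref{char-d}, the logarithmic integral in \ref{char-log}, and the qualitative nondoubling $A_\infty$ bound \ref{char-b}.

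For \ref{char-d} $\implies$ \ref{char-log}, I would first note that the exponent $\alpha$ in \ref{char-d} may be assumed to lie in $(0,1)$ (otherwise replace it by $\min\{\alpha,1/2\}$, adjusting the constant via $(\mu(F)/\mu(B))^\alpha \leq (\mu(F)/\mu(B))^{\alpha'}$). Fix a ball $B$ with $2B\Subset\Omega$ and write $\lambda = w_{2B}$. For $s\geq 1$, Chebyshev gives $\mu(F_s)\leq w(F_s)/(s\lambda)$, while \ref{char-d} yields $w(F_s)\leq C(\mu(F_s)/\mu(B))^\alpha w(2B)$. Combining these, together with $\lambda \mu(B)\geq w(2B)/C_d$ coming from the doubling property, I would solve for $w(F_s)$ to obtain a tail estimate of the form
$$
w(F_s) \leq C' s^{-\beta} w(2B), \qquad \beta := \alpha/(1-\alpha)>0,
$$
valid for all $s\geq 1$. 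Integrating against $ds/s$ over $[1,\infty)$ then converges and produces the desired bound with constant $C'/\beta$.

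For \ref{char-log} $\implies$ \ref{char-b}, I would argue by splitting the set $F$ according to a threshold $\gamma>1$. For the part where $w\leq \gamma w_{2B}$, the trivial estimate gives
$$
w\left(F\cap\{w\leq \gamma w_{2B}\}\right) \leq \gamma w_{2B}\,\mu(F) \leq \gamma\frac{\mu(F)}{\mu(B)} w(2B) \leq \gamma\varepsilon\, w(2B),
$$
using $\mu(2B)\geq \mu(B)$. For the part where $w>\gamma w_{2B}$, I use $\log(w/w_{2B})\geq \log\gamma$ to insert the factor from \ref{char-log}:
$$
w\left(F\cap\{w>\gamma w_{2B}\}\right) \leq \frac{1}{\log\gamma}\int_B w\log^+\!\!\left(\frac{w}{w_{2B}}\right)\dd\mu \leq \frac{C}{\log\gamma}\,w(2B).
$$
Then I would pick $\gamma$ large enough that $C/\log\gamma < \tfrac12 C_d^{-5}$ and subsequently $\varepsilon>0$ small enough that $\gamma\varepsilon < \tfrac12 C_d^{-5}$, so that $w(F)\leq \eta w(2B)$ with $\eta < C_d^{-5}$.

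The main obstacle is the self-bootstrapping step in \ref{char-d} $\implies$ \ref{char-log}: one needs to convert the inequality between $w(F_s)$ and $\mu(F_s)$ into a pure power decay in $s$, which requires that the exponent $\alpha$ be genuinely less than $1$ so that the resulting exponent $\beta = \alpha/(1-\alpha)$ is positive and the tail integral converges. Beyond that, both implications are essentially layer-cake manipulations combined with the careful use of the doubling constant $C_d$ to pass between averages over $B$ and $2B$.
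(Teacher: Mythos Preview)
Your proposal is correct. For \ref{char-log} $\implies$ \ref{char-b}, your splitting at level $\gamma w_{2B}$ is exactly the paper's argument (which it defers to Proposition~\ref{logimplieshmmimpliesb} with $\phi=\log$).

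For \ref{char-d} $\implies$ \ref{char-log}, both you and the paper use the layer-cake representation together with Chebyshev and \ref{char-d}, but the paper takes a slightly more direct route. Instead of substituting $\mu(F_s)\le w(F_s)/(s\lambda)$ back into \ref{char-d} and solving the resulting inequality for $w(F_s)$, the paper simply uses the trivial bound $w(F_t)\le w(2B)$ in the Chebyshev estimate to get $\mu(F_t)\le w(2B)/t$, and then plugs this directly into \ref{char-d} to obtain a $t^{-\alpha}$ tail for $w(F_t)$. This sidesteps the self-bootstrapping entirely and works for every $\alpha>0$, so no reduction to $\alpha<1$ is needed. Your approach is perfectly valid, just a little more roundabout: the restriction $\alpha<1$ and the exponent $\beta=\alpha/(1-\alpha)$ are artifacts of the bootstrapping, not of the problem itself.
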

\begin{proof}
The proof of \ref{char-log} $\implies$ \ref{char-b} is identical to that of \ref{char-hmm} $\implies$ \ref{char-b} (Proposition \ref{logimplieshmmimpliesb}) with $\phi = \log$. We show the other implication. For every ball $B$ and denote $F_t=B\cap\left\lbrace w> t \right\rbrace$, with $t>0$.  By Fubini's theorem
\begin{align*} 
\int_B w \log^+\left(\frac{w}{w_{2B}} \right) \dd \mu 
&= \int_{F_{w_{2B}}} w \log\left(\frac{w}{w_{2B}} \right) \dd \mu
= \int_{F_{w_{2B}}} w(x) \int_{w_{2B}}^{w(x)} \frac{dt}{t} \dd \mu(x)\\
&= \int_{w_{2B}}^\infty \frac{1}{t}  \int_{F_t} w \dd \mu  \dd t.
\end{align*}
By the assumption \ref{char-d}, there exist constants $C>1$, $\alpha>0$ such that 
$$
w(F_t) \leq C w(2B) \left( \frac{\mu(F_t)}{\mu(B)} \right)^\alpha.
$$ 
Also, the inequality $t \mu(F_t)\leq w(F_t) \leq w(2B)$ holds for every $t>0$. With these observations, we have
\begin{align*}
\int_{w_{2B}}^\infty \frac{1}{t}  \int_{F_t} w \dd \mu  \dd t
&\leq \int_{w_{2B}}^\infty \frac{C}{t} w(2B) \left( \frac{\mu(F_t)}{\mu(B)} \right)^\alpha \dd t 
\leq \frac{w(2B)^{1+\alpha}}{\mu(B)^\alpha}\int_{w_{2B}}^\infty \frac{C}{t^{1+\alpha}} \dd t \\
&= \frac{C}{\alpha} \left( \frac{\mu(2B)}{\mu(B)} \right)^\alpha w(2B) \leq \frac{C C_d^\alpha}{\alpha}  w(2B).
\end{align*}

\end{proof}

\begin{propo}
\ref{char-d} $\implies$ \ref{char-saw1} $\implies$ \ref{char-a}
\end{propo}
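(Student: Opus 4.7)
\smallskip

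The plan is that both implications are quite direct, with no substantial obstacle. Neither step requires a covering argument nor any analytic machinery beyond what is already in the statements.

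For the implication \ref{char-d} $\implies$ \ref{char-saw1}, I would simply \emph{exhibit} the desired $\phi$. Given the constants $C, \alpha > 0$ from \ref{char-d}, define $\phi:(0,\infty) \to (0,\infty)$ by $\phi(t) = C t^\alpha$. This $\phi$ is nondecreasing (in fact strictly increasing) and satisfies $\phi(0^+)=\lim_{t\to 0^+} Ct^\alpha = 0$. The inequality required in \ref{char-saw1} is then literally the statement of \ref{char-d} with this choice of $\phi$, applied to any ball $B$ with $2B\Subset\Omega$ and any measurable $F\subset B$. The case $w(2B)=0$ is trivial by the convention stated at the end of Theorem \ref{thm:char}.

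For the implication \ref{char-saw1} $\implies$ \ref{char-a}, I would fix an arbitrary $\eta>0$ and use the asymptotic condition $\phi(0^+)=0$ to produce the $\varepsilon$ needed in \ref{char-a}. Concretely, since $\lim_{t\to 0^+}\phi(t)=0$, one can select $\varepsilon>0$ small enough that $\phi(\varepsilon)\leq \eta$; such an $\varepsilon$ exists precisely because of the limit condition on $\phi$. Then for any ball $B$ with $2B \Subset \Omega$ and any measurable $F\subset B$ with $\mu(F)\leq \varepsilon \mu(B)$, the monotonicity of $\phi$ yields
\begin{equation*}
\phi\!\left(\frac{\mu(F)}{\mu(B)}\right) \leq \phi(\varepsilon) \leq \eta,
\end{equation*}
and combining this with the inequality from \ref{char-saw1} gives $w(F) \leq \phi(\mu(F)/\mu(B))\, w(2B) \leq \eta\, w(2B)$, which is exactly \ref{char-a}.

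The only mild subtlety, which is essentially notational, is ensuring that $\phi$ is \emph{defined} on all of $(0,\infty)$ and takes values in $(0,\infty)$. Both are satisfied trivially by $\phi(t) = Ct^\alpha$. Since neither implication requires any covering, maximal function, or self-improvement argument, I do not anticipate any obstacle; the whole proposition amounts to packaging the quantitative estimate \ref{char-d} into the qualitative modulus-of-continuity form \ref{char-saw1} and then reading off \ref{char-a} from the continuity of $\phi$ at $0$.
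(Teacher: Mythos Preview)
Your proof is correct and essentially identical to the paper's own argument: for \ref{char-d} $\implies$ \ref{char-saw1} the paper sets $\phi = C(\cdot)^\alpha$ (written tersely as $\phi=(\cdot)^\alpha$), and for \ref{char-saw1} $\implies$ \ref{char-a} it likewise picks $\varepsilon$ with $\phi(\varepsilon)\leq\eta$ and invokes monotonicity. Your write-up is in fact slightly more careful about including the constant $C$ in the definition of $\phi$.
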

\begin{proof}
The first implication is immediate by setting $\phi = \left(\cdot\right)^\alpha$. As for \ref{char-saw1} $\implies$ \ref{char-a}, for every $\eta>0$, we choose $\varepsilon>0$ with $\phi(\varepsilon) \leq \eta$. Let $F \subset B$ be a measurable set with $\mu(F) \leq \varepsilon \mu(B)$. Because $\phi$ is nondecreasing we have 
\begin{equation*}
\frac{w(F)}{w(2B)} \leq \phi\left(\frac{\mu(F)}{\mu(B)}\right) \leq \phi\left(\frac{\varepsilon\mu(B)}{\mu(B)}\right) \leq \phi\left(\varepsilon\right) \leq \eta.
\end{equation*}
\end{proof}

\begin{propo}
\ref{char-c} $\implies$ \ref{char-saw2} $\implies$ \ref{char-a}
\end{propo}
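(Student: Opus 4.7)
For the first implication my plan is to combine H\"older's inequality with the higher integrability of $\BMO$ functions provided by the John-Nirenberg theorem. Fix a ball $B$ with $11 B \Subset \Omega$ and $f \in \BMO(\Omega)$ with $\norm{f}_{\BMO(\Omega)} \leq 1$; let $p>1$ be the exponent from \ref{char-c} and set $p' = p/(p-1)$. H\"older gives
\begin{equation*}
\int_B \abs{f - f_B} w \dd\mu \leq \left(\int_B \abs{f - f_B}^{p'}\dd\mu\right)^{1/p'}\left(\int_B w^p \dd\mu\right)^{1/p}.
\end{equation*}
The John-Nirenberg inequality on doubling metric measure spaces---whose standard chain-of-balls proof is what the enlargement factor $11$ is designed to accommodate---yields $(\dashint_B \abs{f - f_B}^{p'} \dd\mu)^{1/p'} \leq C(p', C_d)$. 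Estimating the second factor by \ref{char-c} and using $\mu(B) \leq \mu(2B)$ then produces $\int_B \abs{f-f_B} w \dd\mu \leq C w(2B)$, which is \ref{char-saw2}.

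For the second implication I will test \ref{char-saw2} against a Coifman-Rochberg-type function. For a ball $B$ with $11B \Subset \Omega$ and a measurable set $F \subset B$ of relative size $\varepsilon := \mu(F)/\mu(B)$, consider $f = -\log M\Chi_F/C_{\mathrm{CR}}$, where $C_{\mathrm{CR}} = C_{\mathrm{CR}}(C_d)$ is the $\BMO$ bound provided by the Coifman-Rochberg theorem (whose proof goes through in the doubling setting); then $\norm{f}_{\BMO(\Omega)} \leq \norm{f}_{\BMO(X)} \leq 1$. Since $M\Chi_F \leq 1$ everywhere and $M\Chi_F = 1$ almost everywhere on $F$, the function $f$ vanishes a.e.\ on $F$ and $\abs{f - f_B} = f_B$ there. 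A standard distributional computation that pairs the layer-cake formula with the weak $(1,1)$ estimate (Proposition \ref{thm:weaktype}) yields
\begin{equation*}
\dashint_B M\Chi_F \dd\mu \leq C\, \varepsilon\bigl(1 + \log (1/\varepsilon)\bigr),
\end{equation*}
so Jensen's inequality applied to the convex function $-\log$ gives $f_B \geq c\log(1/\varepsilon)$ as soon as $\varepsilon$ is sufficiently small. Applying \ref{char-saw2} then produces
\begin{equation*}
c \log(1/\varepsilon)\, w(F) \leq \int_B \abs{f - f_B} w \dd\mu \leq C w(2B).
\end{equation*}

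To bridge the gap between the $11 B \Subset \Omega$ setting above and the $2B \Subset \Omega$ hypothesis of \ref{char-a}, I invoke Lemma \ref{claimsigmacoveringballs} with $\sigma = 1/11$ to cover an arbitrary ball $B$ with $2B \Subset \Omega$ by $N = N(C_d)$ balls $B_i$ of radius $\rad(B)/11$ centered in $B$; for these, $11 B_i \subset 2 B \Subset \Omega$ and the doubling condition yields $\mu(B_i) \geq c(C_d)\mu(B)$. Running the previous argument on each $B_i$ with $F_i = F \cap B_i$ in place of $F$ (note $\mu(F_i)/\mu(B_i) \leq \varepsilon/c(C_d)$) and using $2 B_i \subset 2B$ gives
\begin{equation*}
w(F) \leq \sum_{i=1}^N w(F_i) \leq \frac{C(C_d)\, N}{\log(1/\varepsilon)}\, w(2B),
\end{equation*}
which is at most $\eta w(2B)$ once $\varepsilon > 0$ is chosen small enough depending on $\eta$, giving \ref{char-a}.

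The main obstacle I expect is the Coifman-Rochberg bound $\norm{\log M\Chi_F}_{\BMO(X)} \leq C(C_d)$: while classical over $\R^n$, it has to be either carefully justified or cited in the doubling metric setting. The distributional bound for $\dashint_B M\Chi_F$ displayed above is elementary and closely related, and could be used as the core of a self-contained argument if a direct reference is unavailable.
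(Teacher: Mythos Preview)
Your proof is correct and follows essentially the same strategy as the paper: H\"older plus John--Nirenberg for \ref{char-c} $\Rightarrow$ \ref{char-saw2}, and a Coifman--Rochberg $\BMO$ test function built from $\log M\Chi_F$ together with the covering Lemma~\ref{claimsigmacoveringballs} for \ref{char-saw2} $\Rightarrow$ \ref{char-a}. The only notable difference is in how you bound the average $f_B$: the paper splits the integral over $\{M\Chi_F \le \sqrt{\varepsilon}\}$ and its complement and applies the weak $(1,1)$ estimate directly, whereas you first bound $\dashint_B M\Chi_F$ by layer cake plus weak $(1,1)$ and then invoke Jensen for the convex function $-\log$; both routes yield $f_B \gtrsim \log(1/\varepsilon)$ and are equally valid (your Jensen step is arguably a touch cleaner).
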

\begin{proof}
Assume that $w$ satisfies \ref{char-c}. Let $f\in\BMO(\Omega)$ such that $\norm{f}_{\BMO(\Omega)} \leq 1$, and $p$ such that \ref{char-c} holds. Let $B $ be a ball with $11 B\Subset \Omega$. Using H\"older's inequality
\begin{align*}
\int_B\abs{f-f_B}w\dd\mu &\leq \left(\int_B\abs{f-f_B}^{p'}\dd\mu\right)^\frac{1}{p'}\left(\int_Bw^p\dd\mu\right)^\frac{1}{p}\\
&= \mu(B) \left(\dashint_B\abs{f-f_B}^{p'}\dd\mu\right)^\frac{1}{p'}\left(\dashint_Bw^p\dd\mu\right)^\frac{1}{p}.
\end{align*}
The John-Nirenberg inequality for $\BMO$ functions (Proposition 3.19 in \cite{MR2867756}) implies that
$$
\left(\dashint_B\abs{f-f_B}^{p'}\dd\mu\right)^\frac{1}{p'}
\le C(p,C_d)\norm{f}_{\BMO(11B)}.
$$
By \ref{char-c} we obtain
\begin{align*}
\int_B\abs{f-f_B}w\dd\mu 
&\le C\mu(B)\norm{f}_{\BMO(11B)}\dashint_{2B}w\dd\mu \\
&=C\mu(B)\norm{f}_{\BMO(11B)}\frac{w(2B)}{\mu(2B)} \leq Cw(2B).
\end{align*}

Let us now assume \ref{char-saw2}. To begin with, we make the following claim.
\begin{claim}\label{claimlogarithmmaximal}
Let $F$ be a measurable subset of a ball $B \subset X$ with $\mu(F)>0$. Then there exists a constant $A>1$ depending only on $C_d$ such that the function $f = \log^+\left( \mu(B)\mu(F)^{-1} M( \Chi_F) \right)$ belongs to $\BMO(X)$ with $\norm{f}_{\BMO(X)} \leq A$. 
\end{claim}

\begin{proof}[Proof of Claim \ref{claimlogarithmmaximal}]
By the definition of $\BMO$, it is enough to verify that the function $g = \frac{1}{2}\log\left( M(\Chi_F) \right)$ is in $\BMO(X)$ with norm bounded by a constant only depending on $C_d$. The Coifman--Rochberg theorem states that the function $w = M(\Chi_F)^{1/2}$ is a Muckenhoupt weight of class $A_1(X)$, with its charateristic constant bounded by a constant $C(C_d)$ that depends only on $C_d$. Therefore, we have $\log(w) \in \BMO(X)$ with $$\norm{\log(w)}_{\BMO(X)} \leq \log \left( 2 C(C_d) \right).$$ These are well-known results from the Euclidean theory and have been shown e.~g. in \cite{MR807149}. Those interested in detailed proofs in a metric space may consult \cites{MR4340793} and \cite{shukla_thesis}, respectively.
\end{proof}

We will be needing the following weak version of \ref{char-a}.
\begin{lemma}\label{lemmaibfor11B}
For every $\widetilde{\eta}>0$, there exists an $\widetilde{\varepsilon}>0$ such that for every $F \subset B$ with $11 B \Subset \Omega$ and $\mu(F) \leq \widetilde{\varepsilon} \mu(B)$ we have $w(F) \leq \widetilde{\eta} w(2B)$. 
\end{lemma}
\begin{proof}[Proof of Lemma \ref{lemmaibfor11B}]
Given $\widetilde{\eta}>0$, we choose $\widetilde{\varepsilon}>0$ small enough such that $ \widetilde{C} \sqrt{\widetilde{\varepsilon}} \leq 1/4 $ and $\widetilde{\varepsilon} \leq e^{-4 A C/\widetilde{\eta}}$, where $C$ is given by \ref{char-saw2}, $A=A(C_d)$ is the constant from Claim \ref{claimlogarithmmaximal}, and $\widetilde{C}=\widetilde{C}(C_d)$ is the bound for the $(1,1)$-weak type inequality (Theorem \ref{thm:weaktype}) for the maximal function. Furthermore, let $0 < \delta = \mu(F)\mu(B)^{-1} \leq \widetilde{\varepsilon}<1$, $r= \sqrt{\delta}$, and $f$ be the function from Claim \ref{claimlogarithmmaximal}. Then
\begin{align*}
f_B &\leq \frac{1}{\mu(B)} \int_{B \cap \lbrace M(\Chi_F) \leq r \rbrace} f \dd \mu + \frac{1}{\mu(B)} \int_{B \cap \lbrace M(\Chi_F) > r \rbrace} f \dd \mu \\
&\leq \log^+\left( \frac{\mu(B)}{\mu(F) }  r \right)+ \frac{1}{\mu(B)}\log^+\left( \frac{\mu(B)}{\mu(F) } \right)  \mu \left(\lbrace M(\Chi_F) > r \rbrace \right) \\
&\leq \log^+\left( \frac{\mu(B)}{\mu(F) }  r \right)+ \frac{1}{\mu(B)}\log^+\left( \frac{\mu(B)}{\mu(F) } \right) \frac{\widetilde{C} }{r} \int_X \Chi_F \dd \mu\\
&= \log\left( \frac{1}{\sqrt{\delta}} \right) + \widetilde{C} \sqrt{\delta } \log\left( \frac{1}{ \delta} \right).
\end{align*}
By the choice of $\widetilde{\varepsilon}$ and the fact that $\delta\leq \widetilde{\varepsilon}$, it follows that
\begin{equation}\label{estimatedifferencelogarithmaverage}
\log\left( \frac{\mu(B)}{\mu(F) } \right) - f_B \geq \log\left( \frac{1}{ \delta} \right)- \log\left( \frac{1}{\sqrt{\delta}} \right) - \widetilde{C} \sqrt{\delta } \log\left( \frac{1}{ \delta} \right) \geq \frac{1}{4} \log\left( \frac{1}{ \delta} \right) \geq \frac{1}{4} \log\left( \frac{1}{ \widetilde{\varepsilon} }\right).
\end{equation}

On the other hand, notice that $f= \log\left(\mu(B)\mu(F)^{-1} \right)$ almost everywhere on $F$. Thanks to Claim \ref{claimlogarithmmaximal} we have $\norm{f}_{\BMO(X)} \leq A$, and thus \ref{char-saw2} for $f/A$ gives
$$
\left( \log\left( \frac{\mu(B)}{\mu(F) } \right) - f_B  \right) w (F) \leq \int_F | f-f_B| w \dd \mu \leq  \int_B | f-f_B| w \dd \mu \leq A C w(2B).
$$
Combining this estimate with \eqref{estimatedifferencelogarithmaverage}, we conclude that
$$
w(F) \leq \frac{4 A C}{\log(\widetilde{\varepsilon}^{-1})} w(2B) \leq \widetilde{\eta} w(2B).
$$
\end{proof}

It remains to complete the proof of \ref{char-saw2} $\implies$ \ref{char-a}, which is nearly the same as the final step of Proposition \ref{charc}. Let $B $ a ball with $2B \Subset \Omega$ and $F \subset B$ a measurable set. Let $\lbrace B_i \rbrace_{i=1}^N$ be the collection of balls from Lemma \ref{claimsigmacoveringballs} with $\sigma=1/12$. Observe that $11 B_i \subset 2 B \Subset \Omega$ and that $\mu(B_i) \geq \widetilde{c}(C_d) \mu(B)$ because the center of each $B_i$ is contained in $B$. Also notice that the number of balls $N$ only depends on $C_d$. 

For any $\eta >0$ define $\widetilde{\eta} = N \eta$, and let $\widetilde{\varepsilon}$ be the parameter associated with $\widetilde{\eta}$ in Lemma \ref{lemmaibfor11B}. Let $\varepsilon= \widetilde{c}(C_d) \widetilde{\varepsilon}$. If $\mu(F) \leq \varepsilon \mu(B)$, then
$$
\frac{\mu(F \cap B_i)}{\mu(B_i)} \leq \frac{\mu(F)}{\widetilde{c}(C_d) \mu(B)} \leq \widetilde{\varepsilon},
$$
implying by Lemma \ref{lemmaibfor11B} that $w( F \cap B_i) \leq \widetilde{\eta} w( 2B_i)$ for every $i=1, \ldots, N$. We conclude that
$$
w(F) \leq \sum_{i=1}^N w( F \cap B_i ) \leq \widetilde{\eta} \sum_{i=1}^N w( 2B_i) \leq \widetilde{\eta} \sum_{i=1}^N w( 11 B_i) \leq \widetilde{\eta} N w(2B) = \eta w(2B).
$$
\end{proof}

\begin{propo}\label{logimplieshmmimpliesb}
\ref{char-log} $\implies$ \ref{char-hmm} $\implies$ \ref{char-b}
\end{propo}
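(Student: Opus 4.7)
The first implication \ref{char-log} $\implies$ \ref{char-hmm} is essentially a repackaging: take $\phi(t) = \log t$ on $(1,\infty)$. This $\phi$ is nondecreasing, maps into $(0,\infty)$, and satisfies $\phi(\infty)=\infty$. Since $\log^+(w/w_{2B})$ vanishes on $\{w \leq w_{2B}\}$ and agrees with $\log(w/w_{2B})$ on $\{w > w_{2B}\}$, the integral in \ref{char-log} coincides with the one in \ref{char-hmm} for this choice of $\phi$, and the same constant $C$ works.

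For \ref{char-hmm} $\implies$ \ref{char-b}, the plan is a two-part decomposition of any test set $F \subset B$ (with $2B \Subset \Omega$) at a threshold of the form $\lambda w_{2B}$ with $\lambda > 1$ chosen later. Write $F = F_1 \cup F_2$, where $F_1 = F \cap \{w \leq \lambda w_{2B}\}$ and $F_2 = F \cap \{w > \lambda w_{2B}\}$. On $F_1$ the pointwise bound $w \leq \lambda w_{2B}$ and the hypothesis $\mu(F) \leq \varepsilon \mu(B)$ give
\[
w(F_1) \leq \lambda w_{2B}\, \mu(F) \leq \lambda \varepsilon\, \mu(B)\, w_{2B} \leq \lambda \varepsilon\, w(2B),
\]
using only $\mu(B) \leq \mu(2B)$. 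On $F_2$, monotonicity of $\phi$ yields $\phi(w/w_{2B}) \geq \phi(\lambda)$ pointwise, so by \ref{char-hmm},
\[
\phi(\lambda)\, w(F_2) \leq \int_{F_2} w\, \phi\!\left(\tfrac{w}{w_{2B}}\right) d\mu \leq \int_{B \cap \{w > w_{2B}\}} w\, \phi\!\left(\tfrac{w}{w_{2B}}\right) d\mu \leq C\, w(2B),
\]
whence $w(F_2) \leq C\, w(2B)/\phi(\lambda)$.

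Adding the two estimates gives $w(F) \leq (\lambda \varepsilon + C/\phi(\lambda))\, w(2B)$. To close the argument, fix any $\eta$ with $0 < \eta < C_d^{-5}$; using $\phi(\infty) = \infty$, choose $\lambda$ so large that $C/\phi(\lambda) < \eta/2$, and then set $\varepsilon = \eta/(2\lambda)$ so that $\lambda \varepsilon = \eta/2$. This forces $w(F) < \eta\, w(2B)$ as soon as $\mu(F) \leq \varepsilon \mu(B)$, which is precisely \ref{char-b}. There is no real obstacle: the argument is just a clean Chebyshev-style splitting, and the only structural input besides \ref{char-hmm} is the trivial comparison $\mu(B) w_{2B} \leq w(2B)$.
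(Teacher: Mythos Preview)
Your proof is correct and follows essentially the same approach as the paper: for \ref{char-log} $\implies$ \ref{char-hmm} you both take $\phi=\log$, and for \ref{char-hmm} $\implies$ \ref{char-b} you both split $F$ at the threshold $\lambda w_{2B}$, bound $w(F_1)$ pointwise, bound $w(F_2)$ via monotonicity of $\phi$ and the hypothesis, and then choose $\lambda$ large (using $\phi(\infty)=\infty$) followed by $\varepsilon$ small to force the total below $C_d^{-5}$. The only cosmetic difference is the order in which the constants are fixed.
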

\begin{proof}
To see that \ref{char-log} implies \ref{char-hmm}, it is enough to choose $\phi= \log$. We show the other implication. If $C>0$ and $\phi$ are as in \ref{char-hmm}, take $\gamma >1$ large enough so that $\phi(\gamma) > 2 C C_d^{5}$. Let $\varepsilon= (2\gamma C_d^{5})^{-1}$ and 
$$\eta = \varepsilon \gamma+ \frac{C}{\phi(\gamma)}.$$
Notice that $\eta < C_d^{-5}$. 
Let $F \subset B$ be such that $\mu(F) \leq \varepsilon \mu(B),$ and define $F_1=F\cap\lbrace w\leq \gamma w_{2B} \rbrace$, $F_2= F \setminus F_1$. It is immediate that $w(F_1) \leq \gamma w_{2B} \mu(F_1)$. Using the assumption \ref{char-hmm} and the fact that $\phi$ is nondecreasing, we obtain
\begin{align*}
w(F_2) &=  \frac{1}{\phi(\gamma)}\int_{F\cap \lbrace w> \gamma w_{2B} \rbrace } w \phi(\gamma) \dd\mu 
\leq \frac{1}{\phi(\gamma)}\int_{F\cap \lbrace w > \gamma w_{2B} \rbrace } w \phi\left( \frac{w}{w_{2B}} \right) \dd\mu \\
&\leq \frac{1}{\phi(\gamma)}\int_{B\cap\lbrace w >  w_{2B} \rbrace } w \phi\left( \frac{w}{w_{2B}} \right) \dd\mu
\leq \frac{C w(2B)}{\phi(\gamma)}.
\end{align*}
Collecting the estimates for $w(F_1)$ and $w(F_2)$ gives
$$
\frac{w(F)}{w(2B)} \leq \frac{\mu(F)}{\mu(2B)} \gamma + \frac{C}{\phi(\gamma)} \leq \varepsilon \gamma + \frac{C}{\phi(\gamma)} = \eta.
$$
\end{proof}

Although the assertions in Theorem \ref{thm:char} are modeled after conditions that hold true for Muckenhoupt weights, 
we briefly discuss two conditions for Muckenhoupt $A_\infty$ weights that fail to hold for functions satisfying a weak reverse H\"older inequality.

\begin{enumerate}[label=\normalfont{(*\alph*)}]
\item\label{char-expfail} (Exponential-type condition) There exists a constant $C>0$ such that 
$$
\dashint_{B} w \dd \mu \leq  C \exp \left(\dashint_{2B}\log w  \dd\mu\right)
$$
for every ball $B$ with $2B\Subset \Omega$.
\item\label{char-subfail} (Sublevel sets condition) There exist constants $0<\alpha, \beta<1$ such that 
$$
\mu\left(B\cap \left\lbrace w\leq \beta w_{2B} \right\rbrace \right) \leq \alpha \mu(B)
$$
for every ball $B$ with $2B\Subset \Omega$.
\end{enumerate}

For a counterexample, let $w(x)=e^x$ on $\R$. By Example \ref{example:revclass}, this is indeed a weight satisfying the assertions in Theorem \ref{thm:char}. However, $w$ does not satisfy \ref{char-expfail}. To see this, assume that there exist  constants $C>0,$ $\alpha \in \R$ such that for every interval $B\subset\R$,
$$
\dashint_B w\dd x\leq C \exp \left( \alpha \dashint_{2B} \log w\dd x \right).
$$
Consider intervals $B=(-r,r)$ with $r>0$ centered at the origin.  Then 
$$
\dashint_B w\dd x=\frac{e^{r}-e^{-r}}{2r}, \quad \dashint_{2B} \log w \dd x= \frac{1}{4r}\int_{-2r}^{2r} x \dd x= 0.
$$
By assumption, this means that $\left(e^{r}-e^{-r}\right)\left(2r\right)^{-1} \leq C \exp(0)=C$ for every $r>0$, a contradiction because the left-hand side is an unbounded function of $r>0$.

In the same way, assume that there exist constants $0<\alpha, \beta<1$ such that \ref{char-subfail} holds. The condition on the left-hand side becomes
$$
\beta \dashint_{2B}w\dd x = \frac{\beta}{4r}\left(e^{2r}-e^{-2r}\right) 
$$
which for large enough $r$ means that $\abs{ B \cap \lbrace w \leq \beta w_{2B} \rbrace }=\abs{B}$, whereby the claim becomes $\abs{B}\leq \alpha\abs{B}< \abs{B}$, a contradiction. 

\section{Qualitative characterization and the reverse H\"older inequality}\label{weakRHI}

In this section we show that the assertions \ref{char-a} -- \ref{char-c} in Theorem \ref{thm:char} are equivalent. 
This is the content of the following theorem, which is the generalization of \cite{spadaro}*{Theorem 1.1} to metric measure spaces with a doubling measure. 
\begin{theorem}\label{maintheorem}
Let $(X,d,\mu)$ be a metric measure space with a doubling measure. Assume that $\Omega \subset X$ is an open set, and let $w$ be a weight on $\Omega$. The following statements are equivalent.
\begin{enumerate}[label=\normalfont{(\roman*)}]
\item\label{one} There exist $p>1$ and a constant $C>0$ such that 
$$
\dashint_B w^p \dd \mu \leq C \left(  \dashint_{2B} w \dd \mu \right)^p
$$
for every ball $B$ with $2B\Subset \Omega$.
\item\label{two} For every $\eta>0$, there exists an $\varepsilon >0$ such that if $B$ is ball with $2B \Subset \Omega$ and $F \subset B$ is a measurable set, then
$\mu(F) \leq \varepsilon \mu(B)$ implies that $w(F) \leq \eta w(2B)$.
\end{enumerate}
\end{theorem}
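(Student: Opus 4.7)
Proof plan. The direction (i) $\Rightarrow$ (ii) is the easy implication and is essentially already contained in Proposition \ref{prop-easy}: H\"older's inequality combined with (i) gives, for every measurable $F\subset B$ with $2B\Subset\Omega$,
\[
w(F)\leq \mu(F)^{1-1/p}\left(\int_B w^p\,d\mu\right)^{1/p} \leq C\left(\frac{\mu(F)}{\mu(B)}\right)^{1-1/p}w(2B),
\]
so taking $\varepsilon=(\eta/C)^{p/(p-1)}$ yields (ii).

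The interesting direction is (ii) $\Rightarrow$ (i). My plan is to establish a geometric decay for the distribution function of $w$ over a fixed ball $B_0$ with $2B_0\Subset\Omega$, measured against $m:=w_{2B_0}$, and then to integrate via a layer-cake formula. As a preparatory step, I would upgrade (ii) to a Fujii--Wilson-type statement: there exist $K>1$ and $\theta\in(0,1)$ such that
\[
w\bigl(B\cap\{w>K\,w_{2B}\}\bigr)\leq \theta\, w(2B)
\]
for every ball $B$ with $2B\Subset\Omega$. This is immediate from Chebyshev's inequality: the set $F_K:=B\cap\{w>Kw_{2B}\}$ satisfies $\mu(F_K)\leq w(2B)/(Kw_{2B})\leq C_d\,\mu(B)/K$, and choosing $K=C_d/\varepsilon(\theta)$ brings us within (ii) applied with $\eta=\theta$.

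The core of the argument is a Calder\'on--Zygmund-type stopping construction that, for each level $\lambda\geq Km$, produces a countable family of disjoint balls $\{B_i\}$ centered in $B_0\cap\{w>\lambda\}$ with $2B_i\Subset\Omega$, such that a fixed dilation of $\{B_i\}$ covers $B_0\cap\{w>\lambda\}$ and each $w_{2B_i}$ is comparable to $\lambda/K$. Following the spirit of \cite{spadaro}, the radii would be defined by a stopping rule of the form $r_x=\inf\{r>0\colon w_{2B(x,r)}\leq\lambda/K\}$, which is strictly positive and finite at every $x$ with $w(x)>\lambda$ thanks to the Lebesgue differentiation theorem and the ceiling $w_{2B_0}=m\leq\lambda/K$; a Vitali-type selection via Lemma \ref{claimsigmacoveringballs} would extract the desired disjoint subfamily. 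Applying the Fujii--Wilson-type estimate above on each stopping ball and summing over the bounded-overlap family $\{2B_i\}_i$ then gives
\[
w\bigl(B_0\cap\{w>K\lambda\}\bigr)\leq \theta \sum_i w(2B_i)\leq \theta\, C(C_d)\, w(2B_0),
\]
which upon iterating across the levels $\lambda_n=K^nm$ yields $w\bigl(B_0\cap\{w>K^nm\}\bigr)\leq (\theta')^n w(2B_0)$ for some $\theta'<1$ and every $n\geq 1$.

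Finally, choosing $p>1$ so close to $1$ that $K^p\theta'<1$, the layer-cake identity
\[
\int_{B_0}w^p\,d\mu=p\int_0^\infty \lambda^{p-1}\mu\bigl(B_0\cap\{w>\lambda\}\bigr)\,d\lambda,
\]
together with the Chebyshev bound $\mu(B_0\cap\{w>\lambda\})\leq w(B_0\cap\{w>\lambda\})/\lambda$ on the tail $\lambda>m$ and the trivial estimate on $\lambda\leq m$, delivers $\int_{B_0}w^p\,d\mu\leq C m^p\mu(B_0)$, which is precisely (i). I expect the main obstacle to lie in the stopping construction: without the Besicovitch covering theorem or the continuity of $r\mapsto \mu(B(x,r))$ used in the Euclidean argument of \cite{spadaro}, the comparability $w_{2B_i}\sim\lambda/K$ must be secured by a careful definition of $r_x$ (possibly as an approximate infimum), while disjointness and the bounded-overlap property of $\{2B_i\}$ have to be obtained through the Vitali covering lemma together with the doubling of $\mu$.
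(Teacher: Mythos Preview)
Your plan for (i) $\Rightarrow$ (ii) is correct, and the architecture for (ii) $\Rightarrow$ (i)---stopping-time construction, geometric decay of superlevel sets, layer-cake---is the right one. The gap is in the iteration. The single-step estimate you display, $w(B_0 \cap \{w > K\lambda\}) \leq \theta\, C(C_d)\, w(2B_0)$, has right-hand side $w(2B_0)$ \emph{independent of} $\lambda$, so applying it at $\lambda = K^{n-1}m$ gives only $w(B_0 \cap \{w > K^n m\}) \leq \theta C\, w(2B_0)$ with no factor $(\theta')^n$: there is nothing to iterate. For the chain to close, the right-hand side must carry a lower superlevel set such as $\{w > c\lambda\}$. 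Two subsidiary issues block this in your sketch. First, the ``bounded overlap'' of $\{2B_i\}$ does not follow from Vitali's lemma (and Lemma \ref{claimsigmacoveringballs} concerns a different kind of covering); the stopping radii are governed by the nondoubling $w$, so there is no Whitney-type comparability of nearby radii. Second, even if one secures $\sum_i w(2B_i) \lesssim w(\bigcup_i B_i)$ from the stopping property, the balls $B_i$ spill outside $B_0$---they are centered in $B_0$ but may have radius a fixed fraction of $\rad(B_0)$---so the estimate does not close on a fixed set.

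The paper's Lemma \ref{mainlemma} resolves all three points at once: it proves $\int_{rB \cap \{w \geq \gamma D\}} w \leq \gamma^{-\beta} \int_{(r+\lambda)B \cap \{w \geq \gamma^{-1}D\}} w$, with a \emph{slightly larger} ball and a \emph{lower} level on the right. The crucial device is to let the threshold $D = D(\lambda,B)$ depend on the radius increment $\lambda$, so that higher levels permit smaller increments $\lambda_k$ with $\sum_k \lambda_k < 1/2$; the iteration then stays inside $\tfrac{3}{2}B \subset 2B$. Summation over the stopping family uses no overlap bound: the local estimate is pushed \emph{inward}, from the covering balls $5B_i$ back to the disjoint balls $B_i$ (via (ii) together with the stopping lower bound on $\dashint_{B_i} w$), and then into $B_i \cap \{w \geq \gamma^{-1}D\}$. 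Your ingredients are correct, but without this level--increment coupling the induction cannot close.
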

The implication \ref{one} $\Rightarrow$ \ref{two} is a simple consequence of H\"older's inequality. In fact, this argument allows to show a quantitative version of \ref{two}. We give a full proof of the reverse implication \ref{two} $\Rightarrow$ \ref{one}, where it is in fact enough to assume that there exist positive $\eta, \varepsilon$ with $\eta < C_d^{-5}$ for which the condition holds, which corresponds to Theorem \ref{thm:char} \ref{char-b}. The upper bound for $\eta$ tends to zero with increasing dimension and cannot be done away with, unlike in the case of Muckenhoupt weights where any $0 < \eta, \varepsilon <1$ will suffice. The following example was given by Sawyer with $n=2$ in \cite{MR654182}.
\begin{example}
Consider $\R^n$ equipped with the Lebesgue measure. Let $S=\lbrace (x_1,\ldots, x_n) \in \R^n \mathbin{:} 0 \leq x_n \leq 1 \rbrace$, and $w=\Chi_S$. For every cube $Q \subset \R^n$ with $\mu(Q \cap S) >0$, the sets $Q \cap S$ and $2Q \cap S$ are rectangles in $\R^n$ whose first $n-1$ sides have length equal to $l(Q)$ and $2l(Q)$ respectively, with $l$ denoting side length. The length of the $n$th side of $Q \cap S$ is no greater than the length of the $n$th side of $2 Q \cap S$. Hence
$$
\frac{w(Q)}{w(2Q)} = \frac{\mu( Q \cap S)}{\mu( 2Q \cap S)} \leq \frac{1}{2^{n-1}}
$$
for every cube $Q$. In particular, for every cube $Q \subset \mathbb{R}^n$ and every measurable subset $F\subset Q,$ we have $w(F) \leq 2^{1-n} w(2Q).$ However, $w$ does not belong to $WRH_p(\R^n)$ for any $p>1$. In other words, $w$ does not satisfy any of the assertions in Theorem \ref{thm:char}. Indeed, considering cubes $Q=\left[-r/2,r/2\right]^n$ centered at the origin with side length $r \geq 2$, and $F = Q \cap S$, we note that
$$
\mu(Q)= r^n, \quad \mu(F) = w(F)= \mu(Q \cap S)=r^{n-1}\quad\text{and}\quad w(2Q)=(2r)^{n-1}.
$$
Theorem \ref{thm:char}~\ref{char-d}, if valid for $w$, would give constants $c, \alpha>0$ such that 
$$
\frac{1}{2^{n-1}}= \frac{r^{n-1}}{(2r)^{n-1}}=\frac{w(F)}{w(2Q)} \leq c \left( \frac{\mu(F)}{\mu(Q)} \right)^\alpha =  \frac{c}{r^\alpha}
$$
for every $r \geq 2$, which is a contradiction. 

The example shows that the upper bound for $\eta$ in the characterization of weak $A_\infty$ weights given by Theorem \ref{thm:char} \ref{char-b} must be smaller than $2^{1-n}.$
\end{example}

The proof of Theorem \ref{maintheorem} relies on the following lemma.
\begin{lemma}\label{mainlemma}
Assume that $w$ satisfies \ref{two} of Theorem \ref{maintheorem}. There exist  constants $\gamma >C_d^3$, and $ \beta>0$, only depending on the parameters of \ref{two}, for which the following statement holds. Let $B$ be a ball with $2B \Subset \Omega$,  $0<r<3/2$, and $\lambda < 10^{-1}$. Then 
$$
\int_{r B \cap \lbrace w \geq \gamma D \rbrace } w \dd \mu \leq \gamma^{-\beta} \int_{(r+\lambda) B \cap \lbrace w \geq \gamma^{-1} D \rbrace } w \dd \mu, 
$$
where 
$$
D=D(\lambda, B)=\frac{w(2B)}{\mu(2B)} C_d^{\log_2 \left( \frac{4}{5\lambda} \right) +1 }.
$$ 
\end{lemma}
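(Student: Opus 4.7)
The plan is to establish this Gehring-type good-$\lambda$ estimate by a Calder\'on--Zygmund stopping time decomposition of the super-level set $E_\gamma := rB \cap \lbrace w \geq \gamma D \rbrace$, exploiting hypothesis \ref{two} on the resulting stopping balls. The definition of $D$ is calibrated precisely so that for every $x \in E_\gamma$ one can find a stopping radius $\rho_x \lesssim \lambda\, \rad(B)$ at which the average of $w$ drops through the value $D$: by Lebesgue differentiation small balls $B(x, \rho)$ have $w_{B(x, \rho)} \to w(x) \geq \gamma D$, while the exponent $\log_2(4/(5\lambda)) + 1$ in the definition of $D$ is (up to constants) the number of doubling steps needed to force $w_{B(x, c\lambda\, \rad(B))} \leq D$ via a chain estimate of the form $w_{B(x, c\lambda\, \rad(B))} \leq (\mu(2B)/\mu(B(x, c\lambda\, \rad(B)))) w_{2B} \leq C_d^{\log_2(4/(5\lambda)) + 1}\, w_{2B} = D$, where $c = c(C_d)$.

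Accordingly, setting $\rho_x^* := \inf \lbrace \rho > 0 : w_{B(x, \rho)} \leq D \rbrace$ and choosing any $\rho_x \in (\rho_x^*/2, \rho_x^*)$ (to sidestep the possible discontinuity of $\rho \mapsto w_{B(x, \rho)}$), the ball $B_x := B(x, \rho_x)$ satisfies $D < w_{B_x} \leq C_d D$, together with $5 B_x \subset (r+\lambda) B$ and $2 B_x \Subset \Omega$. Applying the Vitali covering lemma (\cite{MR2867756}*{Lemma 1.7}) to $\lbrace B_x \rbrace_{x \in E_\gamma}$ yields a pairwise disjoint subfamily $\lbrace B_j \rbrace_j$ such that $E_\gamma \subset \bigcup_j 5 B_j \subset (r+\lambda) B$, and so that $\lbrace 2 B_j \rbrace_j$ has bounded overlap depending only on $C_d$.

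On each $B_j$, the set $F_j := B_j \cap \lbrace w \geq \gamma D \rbrace$ satisfies
$$\gamma D\, \mu(F_j) \leq w(F_j) \leq w(B_j) \leq C_d D\, \mu(B_j),$$
whence $\mu(F_j)/\mu(B_j) \leq C_d/\gamma$. For $\gamma$ large enough that $C_d/\gamma \leq \varepsilon$, where $\varepsilon$ is the parameter of \ref{two} associated with a prescribed small $\eta$, the weak $A_\infty$ hypothesis yields $w(F_j) \leq \eta\, w(2 B_j)$. Since $w(B_j) \geq D\mu(B_j)$ and $w(2B_j \cap \lbrace w < \gamma^{-1} D\rbrace) \leq \gamma^{-1} D\, \mu(2 B_j) \leq C_d^2 \gamma^{-1} w(B_j)$, for $\gamma$ large the sub-level set at $\gamma^{-1} D$ carries only a small fraction of $w(2B_j)$. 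Hence $w(2 B_j) \leq 2\, w(2B_j \cap \lbrace w \geq \gamma^{-1} D \rbrace)$, and summing over $j$, using the bounded overlap of $\lbrace 2 B_j \rbrace$ and $2 B_j \subset (r+\lambda)B$, yields
$$w(E_\gamma) \leq C(C_d)\, \eta\, w\bigl((r+\lambda)B \cap \lbrace w \geq \gamma^{-1} D \rbrace \bigr).$$
Choosing $\gamma$ as a sufficiently large power of $1/\eta$ (depending on $C_d$) then absorbs the constant and $\eta$ into a factor $\gamma^{-\beta}$ for some $\beta = \beta(\eta, \varepsilon, C_d) > 0$, establishing the lemma.

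The principal obstacle is the sharp coupling between $D$, the dilation $\lambda$, and the doubling constant: the stopping radii must lie in a window where $5 B_x$ still fits inside $(r+\lambda)B$ (forcing $\rho_x \lesssim \lambda\, \rad(B)$), yet the doubling chain must reach from that scale up to $2B$ within the exponent $\log_2(4/(5\lambda)) + 1$ so that $w_{B_x} \leq D$ is achievable. A secondary difficulty is the absence of continuity of $\rho \mapsto w_{B(x, \rho)}$ on a general metric space, circumvented by choosing $\rho_x$ strictly below the infimum $\rho_x^*$ and invoking one doubling step to obtain the upper bound $w_{B_x} \leq C_d D$.
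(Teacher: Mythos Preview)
Your strategy---a Calder\'on--Zygmund stopping time at the level $D$, Vitali, then hypothesis \ref{two} on the stopping balls---is the paper's, but the execution has three interlocking gaps that the paper's choice $r_x = s_x/10$ (rather than your $\rho_x \approx \rho_x^{*} = s_x$) is designed to close simultaneously.

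\emph{Scale.} With $D$ as defined, the chain estimate gives only $\rho_x^{*} \leq 5\lambda\,\rad(B)$: it is precisely the inclusion $B(x, 5\lambda\,\rad(B)) \subset 2B \subset B(x, 4\,\rad(B))$ that produces the exponent $\log_2(4/(5\lambda))+1$. Hence $5\rho_x$ can be as large as $25\lambda\,\rad(B)$, and your asserted containment $5B_x \subset (r+\lambda)B$ fails. In the paper $r_x = s_x/10 \leq \lambda\,\rad(B)/2$, so it is the \emph{disjoint} balls $B(x_j, r_j)$, not their dilates, that lie in $(r+\lambda)B$.

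\emph{Covering versus where \textup{(ii)} is applied.} Vitali yields $E_\gamma \subset \bigcup_j 5B_j$, not $\bigcup_j B_j$; the sets $F_j = B_j \cap \{w \geq \gamma D\}$ therefore do not cover $E_\gamma$, and $\sum_j w(F_j)$ does not control $w(E_\gamma)$. The paper applies \textup{(ii)} on the $5$-dilates $B(x, 5r_x)$ themselves; the scaling $r_x = s_x/10$ makes $2 \cdot 5r_x = s_x$, so the doubled ball in \textup{(ii)} lands exactly at the stopping scale.

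\emph{Bounded overlap.} Disjointness of $\{B_j\}$ does not imply bounded overlap of $\{2B_j\}$ in a doubling space when the radii are unconstrained: already in $\mathbb{R}$, the intervals $B_k = (\tfrac12\cdot 10^{-k},\, \tfrac52\cdot 10^{-k})$ are pairwise disjoint while $0 \in 2B_k$ for every $k$. The paper never invokes overlap. Instead it transfers the right-hand side back to the disjoint balls via
\[
\int_{B(x,10r_x)} w \dd\mu \;\leq\; D\,\mu\bigl(B(x,2s_x)\bigr) \;\leq\; C_d^{5} D\,\mu\bigl(B(x,r_x)\bigr) \;\leq\; C_d^{5} \int_{B(x,r_x)} w \dd\mu,
\]
using that $r_x < s_x$ lies below the stopping infimum, and then sums by disjointness of the $B(x_j, r_j)$. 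Your argument can be repaired along these lines, but the repairs amount to reproducing the paper's proof.
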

\begin{proof}
For an $\eta >0$ small enough such that $ \eta C_d^5 < 1$, let $\varepsilon$ be the parameter associated with $\eta$ from \ref{two}. We choose the constants $\gamma $ and $\beta>0$ so that
\begin{equation}\label{choicegammabeta}
\gamma > \max \left\lbrace C_d^3, \frac{C_d^2}{\varepsilon}, \frac{1}{1-\eta C_d^5} \right\rbrace,
\quad\text{and}\quad 
\gamma^{\beta}=\frac{1-\gamma^{-1}}{\eta C_d^5}.
\end{equation}
Fix $B$, $r$, $\lambda$, and $D=D(\lambda,B)$ as in the assumption. 
We denote $A= \gamma D$, $a= \gamma^{-1} D$, 
$$I=\lbrace y\in \Omega \mathbin{:}w(y) \geq A \rbrace,
\quad\text{and}\quad J=\lbrace y\in \Omega \mathbin{:}w(y) \geq a \rbrace.
$$ 
We may and do assume that $\mu\left( r B \cap I \right)>0,$ as otherwise the inequality trivially holds. Let $x$ be a Lebesgue point of $w$ contained in $r B \cap I$. The inclusions $B(x,5\lambda \rad(B)) \subset 2B \subset B(x, 4 \rad(B))$ together with the doubling condition give 
\begin{equation}\label{sx}
\dashint_{B(x,5\lambda \rad(B))} w\dd\mu \leq \frac{w(2B)}{\mu\left( B(x,5\lambda \rad(B)) \right)} \leq \frac{w(2B)}{\mu(2B)}  C_d^{\left\lfloor\log_2 \left( \frac{4}{5\lambda} \right)\right\rfloor +1} \leq D.
\end{equation}
Denote 
$$
s_x =\inf \left\lbrace s>0 \mathbin{:} \:B(x,s) \Subset \Omega \: \text{ and } \: \dashint_{B(x,s)} w\dd\mu \leq D \right\rbrace,
\quad\text{and}\quad 
r_x= \frac{s_x}{10}.
$$
It follows from \eqref{sx} that $s_x \leq 5 \lambda \rad(B)$. Also, because $x$ is a Lebesgue point with $w(x) \geq \gamma D > D$, it is clear that $s_x>0$. Thus there exists a number $\widetilde{s}_x$ such that $s_x\leq \widetilde{s}_x \leq 2 s_x, $ $B(x, \widetilde{s}_x) \Subset \Omega,$ and
$$
\dashint_{B(x,\widetilde{s}_x)} w\dd\mu  \leq D.
$$ 

We have
\begin{align*}
\mu\left( I \cap B(x, 10r_x) \right)  &= \mu\left( I \cap B(x, s_x)  \right) \leq A^{-1} \int_{B(x,s_x)} w \dd\mu \leq A^{-1} \int_{B(x,\widetilde{s}_x)} w \dd\mu \\ 
& \leq  A^{-1} D \mu \left( B(x,\widetilde{s}_x) \right)  \leq  A^{-1} D \mu \left( B(x,2 s_x) \right) =A^{-1} D \mu \left( B(x,20 r_x) \right).
\end{align*}
The doubling condition and \eqref{choicegammabeta} imply that
\begin{align*}
\mu\left( I \cap B(x, 5r_x) \right)&\leq \mu\left( I \cap B(x, 10r_x) \right) \leq A^{-1} D C_d^2 \mu(B(x, 5 r_x))\\
&\leq \gamma^{-1} C_d^2 \mu(B(x, 5 r_x)) \leq \varepsilon \mu(B(x, 5 r_x)).
\end{align*}
This, in turn, lets us apply the assumption \ref{two} to estimate
\begin{align}
\nonumber \int_{I \cap B(x, 5 r_x)} w \dd\mu  &\leq \eta \int_{B(x,10r_x)} w \dd\mu =\eta \int_{B(x,s_x)} w \dd\mu \leq \eta \int_{B(x,\widetilde{s}_x)} w \dd\mu \leq \eta D \mu \left( B(x,\widetilde{s}_x) \right) \\
&\leq \eta D \mu \left( B(x,2 s_x) \right) \leq \eta D C_d^5  \mu \left( B(x,r_x) \right) \leq  \eta C_d^5 \int_{B(x,r_x)} w \dd\mu, \label{estimateI}
\end{align}
where the last inequality follows from the fact that $r_x$ is smaller than the infimum in the definition of $s_x$. We use again the fact that $r_x$ is smaller than $s_x$ to deduce
\begin{align*}
\int_{B(x,r_x)} w \dd\mu &= \int_{B(x,r_x)\setminus J} w \dd\mu + \int _{J \cap B(x,r_x)} w \dd\mu  \leq a \mu \left( B(x,r_x) \right) + \int _{J \cap B(x,r_x)} w \dd\mu\\
&\leq \frac{a}{D} \int_{B(x,r_x)} w \dd\mu +  \int _{J \cap B(x,r_x)} w \dd\mu,
\end{align*}
which implies that 
\begin{equation}\label{estimateJ}
\int_{B(x,r_x)} w \dd\mu \leq \left(1- \frac{a}{D}\right)^{-1} \int _{J \cap B(x,r_x)} w \dd\mu = \left(1- \gamma^{-1} \right)^{-1} \int _{J \cap B(x,r_x)} w \dd\mu.
\end{equation}
Inserting \eqref{estimateJ} into \eqref{estimateI} and recalling the choice of the parameters $\eta$, $\gamma$, and $\beta$ \eqref{choicegammabeta}, we conclude that 
\begin{equation}\label{estimatelocalballs}
\int_{I \cap B(x, 5 r_x)} w \dd\mu \leq \frac{\eta C_d^5}{1- \gamma^{-1}} \int _{J \cap B(x,r_x)} w \dd\mu = \gamma^{-\beta} \int _{J \cap B(x,r_x)} w \dd\mu
\end{equation}
for every Lebesgue point of $w$ contained in $I \cap r B$. If $F$ denotes the set of these points, then the Vitali covering lemma (\cite{MR2867756}*{Lemma 1.7}) provides us with a collection $\lbrace x_j \rbrace \subset F$ such that $\bigcup_{x\in F} B(x,r_x) \subset \bigcup_{j} B(x_j,5 r_j)$ and the family of balls $\lbrace B(x_j,r_j) \rbrace_j$ is disjoint, where we have written $r_j=r_{x_j}$ for short.  Note that we are allowed to apply Vitali lemma because, for every $x\in F,$ we have $r_x \leq \lambda r(B)/2.$ Since these balls cover almost every point in $I \cap r B$, the estimate \eqref{estimatelocalballs} implies that
$$
\int_{I \cap r B } w \dd\mu \leq \sum_{j} \int_{I \cap B(x_j, 5r_j)} w \dd\mu \leq \gamma^{-\beta} \sum_j \int _{J \cap B(x_j,r_j)} w \dd\mu 
= \gamma^{-\beta} \int_{ J \cap \left( \bigcup_j B(x_j,r_j) \right)} w \dd\mu.
$$
Finally, observe that $r_x=s_x/10 \leq \lambda \rad(B)/2$ for every $x \in F$,
which implies that $B(x,r_x) \subset (r + \lambda) B$, and thus 
$$
\gamma^{-\beta} \int_{ J \cap \left( \bigcup_j B(x_j,r_j) \right)} w \dd\mu
\le\gamma^{-\beta} \int_{ J \cap \left( r+\lambda \right) B} w\dd\mu.
$$ 
\end{proof}

\begin{proof}[Proof of Theorem \ref{maintheorem}]
We show that \ref{two} $\implies$ \ref{one}. Let $B$ be a ball with $2B\Subset \Omega$.
Let $\gamma$ and $\beta$ are as in Lemma \ref{mainlemma}, and $p>1$ so that $2(p-1)<\beta.$ Denote
$$
\lambda_k = \frac{4}{5}\cdot 2^{1- 2k \log(\gamma)/\log(C_d)}, \quad k=1,2,\dots.
$$
Since $\log(\gamma) \geq 3 \log(C_d)$, it is easy to verify that $\lambda_k < 1/10 $ for every $k=1,2,\dots$ and that $\sum_{k=1}^\infty \lambda_k < 1/2$. Also, the corresponding constants $D_k=D(\lambda_k, B)$ from the statement of Lemma \ref{mainlemma} satisfy
\begin{equation}\label{definitionsDk}
D_k = \gamma^{2k} \frac{w(2B)}{\mu(2B)},\quad k=1,2,\dots, \quad \text{and} \quad D_k= \gamma^2 D_{k-1},\quad k =2,3,\dots.
\end{equation}
Then
\begin{align}\label{withoutaverages}
\nonumber\int_B w^p \dd\mu &= \int_{B \cap \lbrace w \leq \gamma D_1 \rbrace} w^p \dd\mu + \sum_{k=1}^\infty \int_{B \cap \lbrace \gamma D_k\leq w \leq \gamma D_{k+1} \rbrace} w^{p-1} w\dd\mu \\
& \leq \gamma^p D_1^p \mu(B)+ \sum_{k=1}^\infty  \left(\gamma D_{k+1} \right)^{p-1}\int_{B \cap \lbrace w \geq \gamma D_{k} \rbrace}  w \dd\mu.
\end{align}
For $k=1,2,\dots$, we apply Lemma \ref{mainlemma} repeatedly together with \eqref{definitionsDk} to obtain
\begin{align*}
 \int_{B \cap \lbrace w \geq \gamma D_{k} \rbrace} w\dd\mu
&\leq \gamma^{-\beta} \int_{(1+\lambda_k) B \cap \lbrace w \geq \gamma D_{k-1} \rbrace}  w \dd\mu \\
&\leq \gamma^{-2\beta} \int_{(1+\lambda_k+\lambda_{k-1}) B \cap \lbrace w \geq \gamma D_{k-2} \rbrace} w \dd\mu \\
&\leq \cdots 
\leq \gamma^{-(k-1)\beta} \int_{(1+\lambda_k+\cdots +\lambda_{1}) B \cap \lbrace w \geq \gamma D_{1} \rbrace}  w \dd\mu \\
&\leq \gamma^{-(k-1)\beta} \int_{\frac{3}{2}B} w \dd\mu
\leq \gamma^{-(k-1)\beta} w(2B).
\end{align*}
Combining this with \eqref{withoutaverages}, we have
\begin{align*} 
\dashint_B w^p \dd\mu 
&\leq \gamma^{3p} \left( \frac{w(2B)}{\mu(2B)} \right)^p+ \frac{1}{\mu(B)} \sum_{k=1}^\infty \gamma^{(2k+3)(p-1)} \left( \frac{w(2B)}{\mu(2B)} \right)^{p-1} \gamma^{-(k-1)\beta} w(2B) \\
&\leq \gamma^{3p} \left( \frac{w(2B)}{\mu(2B)} \right)^p + C_d \left( \frac{w(2B)}{\mu(2B)} \right)^{p} \gamma^{5(p-1)} \sum_{k=1}^\infty \gamma^{(2(p-1)-\beta)(k-1)}.
\end{align*}
By the choice of $p$, the series above converges and there exists a constant $C$ depending on the parameters in \ref{two}, as well as on $C_d,\gamma, \beta$, and $p$, such that
$$
\dashint_B w^p \dd \mu \leq C \left( \frac{w(2B)}{\mu(2B)} \right)^p.
$$
\end{proof}

We remark that the factor $2$ in Theorem \ref{thm:char} can be replaced with any other $\sigma>1$, resulting in the same class of weak $A_\infty$ weights.  In the particular case where $\Omega = X$, this formulation coincides with the $\sigma$-weak reverse H\"older classes of weights introduced by Anderson, Hyt\"onen, and Tapiola in \cite{MR3606546} and denoted by them by $RH_p^\sigma$. Anderson et~al. show that $RH_p^\sigma = RH_p^{\sigma'}$ for every $\sigma, \sigma',p>1$; moreover, $w\in RH_p^\sigma$ is equivalent to
$$
\sup_{B \subset X} \frac{1}{w(\sigma B)} \int_B M(w \Chi_{B}) \dd \mu < \infty.
$$

Theorem \ref{maintheoremwithsigma} for an arbitrary $\sigma$ is stated below without proof. Most statements follow by imitating the proof of Theorem \ref{thm:char} together with a covering argument such as that of Lemma \ref{claimsigmacoveringballs}. Namely, for every $\varepsilon>0$ and every ball $B$, we can cover $B$ with $N = N(\varepsilon, C_d)$ balls centered at points of $B$ and with radius $\varepsilon \rad(B)$, choosing $\varepsilon = (\sigma-1)/2$ when $\sigma<2$. The proof of condition \ref{tricky} is more intricate and requires one to follow the steps presented above to arrive at the correct constant, but the argument is identical.

\begin{theorem}\label{maintheoremwithsigma}
Let $(X,d,\mu)$ be a metric measure space with a doubling measure $\mu$. Assume that $\Omega\subset X$ is an open set and that $w$ is a weight on $\Omega$. Let $\sigma>1$. Then the following assertions are equivalent.

\begin{enumerate}[label=\normalfont{(\alph*)}]

\item For every $\eta>0$ there exists an $\varepsilon >0$ such that if $B$ is a ball with $\sigma B\Subset \Omega$ and $F \subset B$ is a measurable set, then 
$\mu(F) \leq \varepsilon \mu(B)$ implies that $w(F) \leq \eta w(\sigma B)$.

\item\label{tricky} There exist constants $\eta, \varepsilon>0$ with $\eta<C_d^{-\left\lfloor \log_2(5 \sigma^2) \right\rfloor -1}$ such that for every ball $B \subset X$ with $\sigma B\Subset \Omega$ and every measurable set $F \subset B$, 
$\mu(F) \leq \varepsilon \mu(B)$ implies that $w(F) \leq \eta w(\sigma B)$.

\item There exist $p>1$ and a constant $C>0$ such that 
$$
\dashint_B w^p \dd \mu \leq C \left(  \dashint_{\sigma B} w \dd \mu \right)^p
$$
for every ball $B$ with $\sigma B\Subset \Omega$.
\item There exist constants $C, \alpha >0$ such that for every ball $B \subset X$ with $\sigma B\Subset \Omega$ and every measurable set $F \subset B$, it holds that 
$$
w(F) \leq C \left( \frac{\mu(F)}{\mu(B)} \right)^\alpha w(\sigma B).
$$

\item There exists a constant $C>0$ for which
$$
\int_B M(w\Chi_B ) \dd \mu \leq C w(\sigma B)
$$
for every ball $B$ with $\sigma B \Subset \Omega$.
\item There exist constants $\alpha, \beta>0$ with $\beta< C_d^{-\left\lfloor \log_2(5 \sigma^2) \right\rfloor -1}$ such that
$$
w \left( B\cap\left\lbrace w\geq  \alpha w_{\sigma B} \right\rbrace \right) \leq \beta w(\sigma B)
$$
for every ball $B$ with $\sigma B\Subset \Omega$.

\item There exists a constant $C>0$ such that for every ball $B$ with $\sigma B\Subset \Omega$, 
$$
\int_B w \log^+\left(\frac{w}{w_{\sigma B}} \right) \dd \mu \leq C w(\sigma B).
$$

\item There exists a nondecreasing function $\phi:(0,\infty)\to(0,\infty)$ with $\phi(0^+)=0$ such that for every ball $B \subset X$ with $\sigma B\Subset \Omega$ and every measurable set $F \subset B$ it holds that
$$
w(F) \leq \phi \left( \frac{\mu(F)}{\mu(B)} \right) w(\sigma B).
$$

\item There exists a constant $C>0$ such that for every ball $B$ with $\kappa B \Subset \Omega$, where $\kappa = \max\lbrace \sigma, 11\rbrace$, and every function $f\in \BMO(\Omega)$ with $\norm{f}_{\BMO(\Omega)} \leq 1$, it holds that
$$
\int_B\abs{ f-f_B } w \dd \mu \leq C w(\sigma B).
$$

\item There exist a constant $C>0$ and a nondecreasing function $\phi: (1,\infty) \to (0,\infty)$ with $\phi(\infty) = \infty$ such that for every ball $B$ with $\sigma B\Subset \Omega$, 
$$
\int_{B\cap\lbrace w>w_{\sigma B} \rbrace} w \, \phi \left(\frac{w}{w_{\sigma B}} \right) \dd \mu \leq C w(\sigma B).
$$
\end{enumerate}
In the case where $w(\sigma B)$ equals zero, the inequalities are trivially satisfied for $B.$
\end{theorem}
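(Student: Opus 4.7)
The plan is to reduce Theorem~\ref{maintheoremwithsigma} to Theorem~\ref{thm:char} via a rescaling argument, splitting into the cases $\sigma \geq 2$ and $1 < \sigma < 2$. When $\sigma \geq 2$, the inclusions $2B \subset \sigma B \subset 2^{\lceil \log_2 \sigma \rceil}B$ allow one to transfer every conclusion of Theorem~\ref{thm:char} in either direction, with the extra dilation factor absorbed into the doubling constant of $\mu$. Moreover $\sigma B \Subset \Omega$ automatically implies $2B \Subset \Omega$, so the hypotheses match up as well.

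When $1 < \sigma < 2$, I would apply Lemma~\ref{claimsigmacoveringballs} with $\rho = (\sigma-1)/2$ to cover any ball $B$ with $\sigma B \Subset \Omega$ by $N = N(\sigma, C_d)$ balls $B_i$ centered in $B$ of radius $\rho\,\rad(B)$. Since the centers $x_i$ lie in $B$, one verifies that $2B_i \subset \sigma B \Subset \Omega$ and $\mu(B_i) \geq c(\sigma, C_d)\mu(B)$ by doubling. These two features allow us to transport each qualitative assertion exactly as in the final step of Proposition~\ref{charc}: given $F \subset B$, decompose $F = \bigcup_i (F \cap B_i)$, invoke Theorem~\ref{thm:char} on each $B_i$ to get $w(F \cap B_i) \leq \eta\, w(2B_i) \leq \eta\, w(\sigma B)$, and sum over $i \leq N$. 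The same template handles the quantitative and log-type conditions (d), (g), (h), and (j), while the Fujii--Wilson bound (e) and the $\BMO$-pairing estimate (i) are obtained by first passing through the chain of equivalences. The enlarged factor $\kappa = \max\{\sigma, 11\}$ in (i) is needed precisely to accommodate both the covering dilation and the John--Nirenberg application required in the proof of Theorem~\ref{thm:char}~\ref{char-saw2}.

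The delicate point, which the paper explicitly singles out, is the sharp constant in assertion~\ref{tricky}: that $\eta < C_d^{-\lfloor \log_2(5\sigma^2) \rfloor - 1}$ is enough for (b) $\Rightarrow$ (c). Here one cannot invoke Theorem~\ref{maintheorem} as a black box, but must redo the proof of Lemma~\ref{mainlemma} with $\sigma B$ replacing $2B$ throughout. The exponent $5$ in the original threshold $\eta C_d^5 < 1$ arose from the chain $B(x, r_x) \subset B(x, 2s_x) \subset B(x, 20 r_x)$ combined with the geometric containment $B(x, 5\lambda \rad(B)) \subset 2B \subset B(x, 4\rad(B))$, which forces $20 < 2^5$. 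For general $\sigma$, the corresponding containments take the form $B(x, \widetilde{c}\lambda \rad(B)) \subset \sigma B \subset B(x, c(\sigma)\rad(B))$ with $c(\sigma)$ proportional to $\sigma$, and the ratio $2s_x/r_x$ one needs to absorb via doubling grows with $\sigma^2$, producing the exponent $\lfloor \log_2(5\sigma^2)\rfloor + 1$.

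The main obstacle will therefore be the constant-tracking through the modified Lemma~\ref{mainlemma} and the subsequent geometric-series argument of Theorem~\ref{maintheorem}. One must re-select $\gamma, \beta, p$ depending on $\sigma$ so that the analogue of \eqref{choicegammabeta} remains solvable, namely $\gamma^\beta = (1-\gamma^{-1})/(\eta C_d^{\lfloor \log_2(5\sigma^2)\rfloor + 1})$, while keeping $2(p-1) < \beta$ so that the geometric series in the proof of Theorem~\ref{maintheorem} still converges. The Sawyer-type counterexample appearing just after the statement of Theorem~\ref{maintheorem} confirms that some $\sigma$-dependent upper bound on $\eta$ is necessary, so this sharpness genuinely cannot be avoided.
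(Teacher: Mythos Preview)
Your plan is essentially the paper's own: the authors state Theorem~\ref{maintheoremwithsigma} without proof and remark that most implications follow by imitating Theorem~\ref{thm:char} together with the covering argument of Lemma~\ref{claimsigmacoveringballs} with parameter $(\sigma-1)/2$, while condition~\ref{tricky} requires redoing Lemma~\ref{mainlemma} and Theorem~\ref{maintheorem} to track the threshold constant. You have correctly identified both of these ingredients and even sketched where the exponent $\lfloor\log_2(5\sigma^2)\rfloor+1$ enters.

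One small oversight: in the case $\sigma\geq 2$ you assert that the inclusions $2B\subset\sigma B$ suffice to transfer ``in either direction''. The direction $2$-condition $\Rightarrow$ $\sigma$-condition does go through by inclusion (since $\sigma B\Subset\Omega$ forces $2B\Subset\Omega$ and $w(2B)\leq w(\sigma B)$), but the reverse direction does not: if you only know $2B\Subset\Omega$ you cannot apply a $\sigma$-hypothesis to $B$ itself, because $\sigma B$ need not be compactly contained in $\Omega$. Here too you need the covering of Lemma~\ref{claimsigmacoveringballs}, this time with parameter roughly $1/\sigma$, so that each $B_i$ satisfies $\sigma B_i\subset 2B\Subset\Omega$. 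This is the same mechanism you already described for $1<\sigma<2$, just with the roles of $2$ and $\sigma$ swapped, so the fix is immediate.
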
 

\begin{bibdiv}
\begin{biblist}

\bib{MR3606546}{article}{
   author={Anderson, T.~C.},
   author={Hyt\"{o}nen, T.},
   author={Tapiola, O.},
   title={Weak $A_\infty$ weights and weak reverse H\"{o}lder property in a space of homogeneous type},
   journal={J. Geom. Anal.},
   volume={27},
   date={2017},
   number={1},
   pages={95--119},
   issn={1050-6926},
}

\bib{MR2867756}{book}{ 
   author={Bj\"{o}rn, A.},
   author={Bj\"{o}rn, J.},
   title={Nonlinear potential theory on metric spaces},
   series={EMS Tracts in Mathematics},
   volume={17},
   publisher={European Mathematical Society (EMS), Z\"{u}rich},
   date={2011},
   pages={xii+403},
   isbn={978-3-03719-099-9},
}

\bib{MR358205}{article}{
   author={Coifman, R.~R.},
   author={Fefferman, C.},
   title={Weighted norm inequalities for maximal functions and singular
   integrals},
   journal={Studia Math.},
   volume={51},
   date={1974},
   pages={241--250},
   issn={0039-3223},
}

\bib{MR0499948}{book}{ 
   author={Coifman, R.~R.},
   author={Weiss, G.},
   title={Analyse harmonique non-commutative sur certains espaces homog\`enes: \'{e}tude de certaines int\'{e}grales singuli\`eres},
   language={French},
   series={Lecture Notes in Mathematics, Vol. 242},
   publisher={Springer-Verlag, Berlin-New York},
   date={1971},
   pages={v+160},
}

\bib{MR1308005}{article}{
   author={Cruz-Uribe, D.},
   author={Neugebauer, C.~J.},
   title={The structure of the reverse H\"{o}lder classes},
   journal={Trans. Amer. Math. Soc.},
   volume={347},
   date={1995},
   number={8},
   pages={2941--2960},
   issn={0002-9947},
}
	
\bib{duo_paseky}{article}{
   author={Duoandikoetxea, J.},
   title={Forty years of Muckenhoupt weights},
	conference={
		title={Function Spaces and Inequalities},
		address={Paseky nad Jizerou},
		date={2013-06},
	},   
	book={
		title={Lecture Notes},
		editor={Luke\v{s}, J.},
		editor={Pick, L.},
		publisher={Matfyzpress},
		address={Prague},
	},
   pages={23--75},
}

\bib{MR3473651}{article}{
   author={Duoandikoetxea, J.},
   author={Mart\'{\i}n-Reyes, F.~L.},
   author={Ombrosi, S.},
   title={On the $A_\infty$ conditions for general bases},
   journal={Math. Z.},
   volume={282},
   date={2016},
   number={3-4},
   pages={955--972},
   issn={0025-5874},
}

\bib{MR807149}{book}{
      author={Garc\'{\i}a-Cuerva, J.},
      author={Rubio~de Francia, J.~L.},
       title={Weighted norm inequalities and related topics},
      series={North-Holland Mathematics Studies},
   publisher={North-Holland Publishing Co., Amsterdam},
        date={1985},
      volume={116},
        ISBN={0-444-87804-1},
        note={Mathematics Studies, 104},
}

\bib{MR1791462}{book}{ 
   author={Genebashvili, I.},
   author={Gogatishvili, A.},
   author={Kokilashvili, V.},
   author={Krbec, M.},
   title={Weight theory for integral transforms on spaces of homogeneous
   type},
   series={Pitman Monographs and Surveys in Pure and Applied Mathematics},
   volume={92},
   publisher={Longman, Harlow},
   date={1998},
   pages={xii+410},
   isbn={0-582-30295-1},
}

\bib{MR549962}{article}{
   author={Giaquinta, M.},
   author={Modica, G.},
   title={Regularity results for some classes of higher order nonlinear
   elliptic systems},
   journal={J. Reine Angew. Math.},
   volume={311(312)},
   date={1979},
   pages={145--169},
   issn={0075-4102},
}

\bib{MR3243734}{book}{ 
    AUTHOR = {Grafakos, L.},
     TITLE = {Classical {F}ourier analysis},
    SERIES = {Graduate Texts in Mathematics},
    VOLUME = {249},
   EDITION = {Third edition},
 PUBLISHER = {Springer, New York},
      YEAR = {2014},
     PAGES = {xviii+638},
      ISBN = {978-1-4939-1193-6; 978-1-4939-1194-3},   
}

\bib{Hruscev1984}{article}{
   author={Hru\v{s}\v{c}ev, S.~V.},
   title={A description of weights satisfying the $A_{\infty }$ condition
   of Muckenhoupt},
   journal={Proc. Amer. Math. Soc.},
   volume={90},
   date={1984},
   number={2},
   pages={253--257},
   issn={0002-9939},
}

\bib{HytonenPerez2013}{article}{
	author = {Hyt\"onen, T.},
	author = {Pérez, C.}, 
	title = {Sharp weighted bounds involving $A_\infty$}, 
	journal = {Anal. PDE},
	volume = {6},
	date = {2013},
	number = {4}, 
	pages = {777--818},
}

\bib{HytonenPerezRela2012}{article}{
	author = {Hyt\"onen, T.},
	author = {Pérez, C.}, 
	author = {Rela, E.},
	title = {Sharp reverse {H}\"older property for $A_\infty$ weights on spaces of homogeneous type}, 
	journal = {J. Funct. Anal.},
	volume = {263},
	date = {2012},
	number = {12}, 
	pages = {3883--3899},
}

\bib{MR3310925}{article}{
   author={Indratno, S.},
   author={Maldonado, D.},
   author={Silwal, S.},
   title={A visual formalism for weights satisfying reverse inequalities},
   journal={Expo. Math.},
   volume={33},
   date={2015},
   number={1},
   pages={1--29},
   issn={0723-0869},
}

\bib{MR802488}{article}{
   author={Iwaniec, T.},
   author={Nolder, C.~A.},
   title={Hardy-Littlewood inequality for quasiregular mappings in certain
   domains in ${\bf R}^n$},
   journal={Ann. Acad. Sci. Fenn. Ser. A I Math.},
   volume={10},
   date={1985},
   pages={267--282},
   issn={0066-1953},
}

\bib{MR417568}{article}{
   author={Meyers, N.~G.},
   author={Elcrat, A.},
   title={Some results on regularity for solutions of non-linear elliptic
   systems and quasi-regular functions},
   journal={Duke Math. J.},
   volume={42},
   date={1975},
   pages={121--136},
   issn={0012-7094},
}

\bib{MR3130552}{article}{
   author={Kinnunen, J.},
   author={Shukla, P.},
   title={The structure of reverse H\"{o}lder classes on metric measure spaces},
   journal={Nonlinear Anal.},
   volume={95},
   date={2014},
   pages={666--675},
   issn={0362-546X},
}

\bib{MR3265363}{article}{
   author={Kinnunen, J.},
   author={Shukla, P.},
   title={Gehring's lemma and reverse H\"{o}lder classes on metric measure
   spaces},
   journal={Comput. Methods Funct. Theory},
   volume={14},
   date={2014},
   number={2-3},
   pages={295--314},
   issn={1617-9447},
}

\bib{MR2815740}{article}{
   author={Korte, R.},
   author={Kansanen, O.~E.},
   title={Strong $A_\infty$-weights are $A_\infty$-weights on metric spaces},
   journal={Rev. Mat. Iberoam.},
   volume={27},
   date={2011},
   number={1},
   pages={335--354},
   issn={0213-2230},
}

\bib{MR1274087}{article}{
   author={Kinnunen, J.},
   title={Higher integrability with weights},
   journal={Ann. Acad. Sci. Fenn. Ser. A I Math.},
   volume={19},
   date={1994},
   number={2},
   pages={355--366},
   issn={0066-1953},
}

\bib{MR4340793}{article}{
   author={Kurki, E.-K.},
   author={Mudarra, C.},
   title={On the extension of Muckenhoupt weights in metric spaces},
   journal={Nonlinear Anal.},
   volume={215},
   date={2022},
   pages={paper no. 112671},
   issn={0362-546X},
   %review={\MR{4340793}},
   %doi={10.1016/j.na.2021.112671},
}

\bib{MR654182}{article}{
   author={Sawyer, E.~T.},
   title={Two weight norm inequalities for certain maximal and integral operators},
   conference={
      title={Harmonic analysis},
      address={Minneapolis, Minn.},
      date={1981},
   },
   book={
      series={Lecture Notes in Math.},
      volume={908},
      publisher={Springer, Berlin--New York},
   },
   date={1982},
   pages={102--127},
}

\bib{shukla_thesis}{thesis}{
   author={Shukla, P.},
   title={The structure of reverse H\"{o}lder classes on metric measure spaces},
   type={Doctoral dissertation},
   organization={University of Oulu},
   date={2012-10},
}

\bib{spadaro}{article}{
   author={Spadaro, E.},
   title={Nondoubling $A_\infty$ weights},
   journal={Adv. Calc. Var.},
   volume={5},
   date={2012},
   pages={345--354},
}

\bib{MR570689}{article}{
   author={Stredulinsky, E.~W.},
   title={Higher integrability from reverse H\"{o}lder inequalities},
   journal={Indiana Univ. Math. J.},
   volume={29},
   date={1980},
   number={3},
   pages={407--413},
   issn={0022-2518},
}

\bib{MR1011673}{book}{ 
    AUTHOR = {Str\"{o}mberg, J.-O.},
    AUTHOR = {Torchinsky, A.},
     TITLE = {Weighted {H}ardy spaces},
    SERIES = {Lecture Notes in Mathematics},
    VOLUME = {1381},
 PUBLISHER = {Springer-Verlag, Berlin},
      YEAR = {1989},
     PAGES = {vi+193},
      ISBN = {3-540-51402-3},
}

\bib{MR2173373}{article}{
   author={Zatorska-Goldstein, A.},
   title={Very weak solutions of nonlinear subelliptic equations},
   journal={Ann. Acad. Sci. Fenn. Math.},
   volume={30},
   date={2005},
   number={2},
   pages={407--436},
   issn={1239-629X},
}

\end{biblist}
\end{bibdiv}

\end{document}